\def\1{{\mathbf 1}}
\numberwithin{equation}{section}
\newtheorem{theo}{Theorem}
\newtheorem{prop}[theo]{Proposition}
\newtheorem{coro}[theo]{Corollary}
\newtheorem{lemma}[theo]{Lemma}
\theoremstyle{definition}
\begin{document}

\title[HD$0$L $\omega$-equivalence and periodicity problems in the primitive case]{HD$0$L $\omega$-equivalence and periodicity problems in the primitive case (To the memory of G. Rauzy)}
\author{Fabien Durand}
\address[F.D.]{\newline
Universit\'e de Picardie Jules Verne\newline
Laboratoire Ami\'enois de Math\'ematiques Fondamentales et
Appliqu\'ees\newline
CNRS-UMR 6140\newline
33 rue Saint Leu\newline
80039 Amiens Cedex 01\newline
France.}
\email{fabien.durand@u-picardie.fr}

\begin{abstract}
In this paper I would like to witness the mathematical inventiveness of G. Rauzy through a personnal exchanges I had with him. 
The objects that will emerge will be used to treat the decidability of the HD$0$L $\omega$-equivalence and periodicity problems in the primitive case.
\end{abstract}

\maketitle

\section{Introduction}

\subsection{Some words about G\'erard Rauzy}

I started my PhD in 1992 under the direction of G. Rauzy. 
The topic of my PhD was {\em Substitutions and Bratteli diagrams}. 
G. Rauzy was a worldwide recognized expert on substitutions (and many other topics) but Bratteli diagram representations of substitution subshifts was not one of these subjects.
Thus I had an unformal second advisor, B. Host. 
Later, as a common agreement, he became my advisor.

Some days after I begun my PhD he told me that he noticed something amazing about substitutions.
He wrote on the blackboard the Fibonacci fixed point on the alphabet $\{ 0,1 \}$ starting with $0$.
He considered all occurrences of the letter $0$ and coded the words between two consecutive such occurrences. 
It is not surprising (see \cite{Morse&Hedlund:1940}) that only two words appear : $01$ and $0$.
If you respectively code them by $0$ and $1$ then you again obtain the same sequence.
This is not surprising too because this sequence is the fixed point of the morphism defined by $0\mapsto 01$ and $1\mapsto 0$.
Then he wrote a long prefix of the fixed point of the Morse sequence that we call $x$. 
Applying the same coding process (see below) to $x$ it clearly appeared that the new sequence called the {\em derived sequence of $x$}, and denoted by $\mathcal{D} (x)$, is not the same for the obvious reason that $3$ different letters occur.
He repeated the process to this new sequence.
Let's do it. 
The result was a sequence on a $4$ letter alphabet. 
Once again he repeated the process.
Amazingly we obtained the same sequence (the same long prefix).
Repeating the process will clearly produce the same sequence. 
The process is periodic. 

$$
\begin{array}{lllllllllllllllll}
x &= 011 & 01 & 0 & 011 & 0 & 01 & 011 & 01 & 0 & 01 & 011 & 0 & 011 & 01 & 0 \dots \\
\mathcal{D} (x) & = 0   & 1  & 2 & 0   & 2 & 1  & 0   & 1  & 2 & 1  & 0   & 2 & 0   & 1  & 2 \dots \\
\end{array}
$$

$$
\begin{array}{llllllllllllllllllll}
\mathcal{D} (x)   & = 012 & 021 & 0121 & 02 & 012 & 021 & 02 & 0121 & 012 & 021 & 0121 & 02  \dots \\
\mathcal{D}^2 (x) & = 0   & 1   & 2    & 3  & 0   & 1   & 3  & 2    & 0   & 1   & 2    & 3   \dots
\end{array}
$$

$$
\begin{array}{llllllllllllllllllll}
\mathcal{D}^2 (x) & = 0123 & 0132 & 01232 & 013 & 0123 & 0132 & 013 & 01232 & 0123 & 0132  \dots \\
\mathcal{D}^3 (x) & = 0    & 1    & 2     & 3   & 0    & 1    & 3   & 2     & 0    & 1\dots
\end{array}
$$

Then he showed me another example where the period was 2. 
He was wondering if such a period always exists for fixed points of primitive substitutions. 
He knew that when this process is eventually periodic then the sequence is primitive substitutive and he explained me the proof on the blackboard. 
 
This proof was very useful for my PhD and for the present paper : this is the construction exposed in Proposition  \ref{prop:imagemorphisme}.
It became a second subject in my PhD that I succeeded to solve in \cite{Durand:1998a} and that provided me many arguments to obtain an extension of Cobham's theorem for primitive substitutions (\cite{Durand:1998b}).

When he showed me this process I found curious to look at it. 
Why was it interesting~?
In fact, at this time, I did not know much about the work of G. Rauzy and almost nothing about interval exchange transformations (iets) and what he did with.
I realized quickly that this process was in fact the  induction process defined on dynamical systems by the Poincar\'e first return map : the derived sequence $\mathcal{D} (x)$ generates a subshift that is conjugate to the induced dynamical system $(X,T)$ on the cylinder set $[0]$, where $(X,T)$ is the subshift generated by $x$.
Thus, this corresponds to the induction process he defined \cite{Rauzy:1979} to study iets.
He induced on the left most interval and showed that the induced tranformation is again an iet with the same permutation.
This is also the case for primitive substitutions~: derived sequences of purely primitive substitutive sequences are purely primitive substitutive sequences (\cite{Durand:1998a}).
His goal was to tackle the Keane's conjecture saying that almost all iets are uniquely ergodic.
This was solved  independently in \cite{Masur:1982} and \cite{Veech:1982}.
Later M. Boshernitzan and C. R. Carroll \cite{Boshernitzan&Carroll:1997} proved that, for an iet defined on a quadratic field, "every consistent method of induction is eventually periodic" (up to rescaling).
This is what I obtained in \cite{Durand:1998a} in the context of primitive substitutions and that was expected by G. Rauzy.

Thus I imagine the intuition of G. Rauzy was certainly leaded by iet considerations.

I am very grateful to G\'erard Rauzy for the wonderful subject he gave me and for the opportunity he offered me to do research.

\subsection{The HD$0$L $\omega$-equivalence problem}

In this paper we propose to apply what the author developped in \cite{Durand:1998a} and \cite{Durand:1998b} (to answer the question of G. Rauzy) to solve the following problems in the primitive case.
To describe these problems we need some notations. 
Let $\sigma : A^* \to A^* $ and $\tau : B^* \to B^*$ be morphisms, and, $u$ and $v$ be words such that $\lim_{n\to \infty } \sigma^n (u)$ and $\lim_{n\to \infty } \tau^n (v)$ converge to respectively $x\in A^\mathbb{N}$ and $y\in B^\mathbb{N}$. 
We also need two morphisms $\phi : A^* \to C^*$ and $\psi : B^* \to D^*$.

\subsubsection{The HD$0$L $\omega$-equivalence problem}

This problem is open for more than 30 years and ask whether it is decidable to know if two morphic sequences are equal. 
More precisely, 

\begin{center}
Is the equality "$\phi (x) = \psi (y)$" decidable?
\end{center}

In this paper we answer positively to this problem for primitive morphisms.
The equality problem "$x=y$" (also called the D$0$L $\omega$-equivalence problem) was solved in 1984 by K. Culik II and T. Harju \cite{Culik&Harju:1984}.
An other proof was given by J. Honkala in \cite{Honkala:2007} (see also \cite{Honkala:2009a}).

\subsubsection{More on the equality of sequences generated by morphisms}

Once the equality "$\phi (x) = \psi (y)$" or "$x = y$" is satisfied, it is natural to look for relations between the involved morphisms.

In \cite{Seebold:1998} it is shown than when two invertible substitutions on a two letter alphabet share the same fixed point, then these substitutions are powers of some other morphism (for different proofs see also \cite{Berthe&Frettloh&Sirvent:preprint} and \cite{Rao&Wen:2010}).
For more general results on this topic see \cite{Krieger:2008,Diekert&Krieger:2009}. 

We will give a proof of a weaker result but that can be extended to bigger alphabets : if two primitive substitutions share the same fixed point and satisfy some conditions on their return words then they have a common power.

\subsubsection{The decidability of the periodicity of HD$0$L infinite sequences}

This is also an open problem for more than 30 years. 
It ask : 

\begin{center}
Is it decidable to know whether $\phi (x)$ is ultimately periodic ?
\end{center} 

J.-J. Pansiot proved in \cite{Pansiot:1986} that the periodicity of $x$ is decidable.
We answer positively to this problem in the primitive case and in a forthcoming paper the author should complete this work to close the problem.

In the sequel the vocabulary of D$0$L and HD$0$L systems will be replace by the vocabulary of substitutions.

\subsection{Some definitions}

\subsubsection{Words and sequences} 
An {\it alphabet} $A$ is a finite set of elements called {\it
  letters}. 
A {\it word} over $A$ is an element of the free monoid
generated by $A$, denoted by $A^*$. 
Let $x = x_0x_1 \cdots x_{n-1}$
(with $x_i\in A$, $0\leq i\leq n-1$) be a word, its {\it length} is
$n$ and is denoted by $|x|$. 
The {\it empty word} is denoted by $\epsilon$, $|\epsilon| = 0$. 
The set of non-empty words over $A$ is denoted by $A^+$. 
The elements of $A^{\mathbb{N}}$ are called {\it sequences}. 
If $x=x_0x_1\cdots$ is a sequence (with $x_i\in A$, $i\in \mathbb{N}$) and $I=[k,l]$ an interval of
$\mathbb{N}$ we set $x_I = x_k x_{k+1}\cdots x_{l}$ and we say that $x_{I}$
is a {\it factor} of $x$.  If $k = 0$, we say that $x_{I}$ is a {\it
 prefix} of $x$. 
The set of factors of length $n$ of $x$ is written
$\mathcal{L}_n(x)$ and the set of factors of $x$, or the {\it language} of $x$,
is denoted $\mathcal{L}(x)$. 
The {\it occurrences} in $x$ of a word $u$ are the
integers $i$ such that $x_{[i,i + |u| - 1]}= u$. 
If $u$ has an occurrence in $x$, we also say that $u$ {\em appears} in $x$.
When $x$ is a word,
we use the same terminology with similar definitions.

The sequence $x$ is {\it ultimately periodic} if there exist a word
$u$ and a non-empty word $v$ such that $x=uv^{\omega}$, where
$v^{\omega}= vvv\cdots $.
In this case $|v|$ is called a {\em length period} of $x$. 
It is {\it periodic} if $u$ is the empty word. 
A word $u$ is {\em recurrent} in $x$ if it appears in $x$ infinitely many times.
A sequence $x$ is {\it uniformly recurrent} if every factor $u$ of $x$
appears infinitely often in $x$ and the greatest
difference of two successive occurrences of $u$ is bounded.

\subsubsection{Morphisms and matrices} 
Let $A$ and $B$ be two alphabets. Let $\sigma$ be a {\it morphism} 
from $A^*$ to $B^*$. 
When $\sigma (A) = B$, we say $\sigma$ is a {\em coding}. 
Thus, codings are onto.
If $\sigma (A)$ is included in $B^+$, it induces by concatenation a map
from $A^{\mathbb{N}}$ to $B^{\mathbb{N}}$. These two maps are also called $\sigma$.
To the morphism $\sigma$ is naturally associated the matrix $M_{\sigma} =
(m_{i,j})_{i\in B , j \in A }$ where $m_{i,j}$ is the number of
occurrences of $i$ in the word $\sigma(j)$.

\subsubsection{Substitutions and substitutive sequences}

A {\it substitution} is a morphism $\sigma : A^* \rightarrow
A^{*}$. 
If there exists a letter $a\in A$ and a word $u\in A^+$ such that $\sigma(a)=au$ and moreover, if $\lim_{n\to+\infty}|\sigma^n(a)|=+\infty$, then $\sigma$ is said
    to be {\em prolongable on $a$}.

Since for all $n\in\mathbb{N}$, $\sigma^n(a)$ is a prefix of $\sigma^{n+1}(a)$ and because
     $|\sigma^n(a)|$ tends to infinity with $n$, the
     sequence $(\sigma^n(aaa\cdots ))_{n\ge 0}$ converges (for the usual
     product topology on $A^\mathbb{N}$) to a sequence denoted by
     $\sigma^\omega(a)$. 
The morphism $\sigma$ being continuous for the product topology, $\sigma^\omega(a)$ is a fixed point of $\sigma$: $\sigma (\sigma^\omega(a)) = \sigma^\omega(a)$.
A sequence obtained in
    this way by iterating a prolongable substitution is said to be {\em purely substitutive} (w.r.t. $\sigma$). 
If $x\in
    A^\mathbb{N}$ is purely substitutive and $\phi:A^*\to B^*$ is a morphism then the sequence $y=\phi (x)$ is said to be a {\em morphic sequence} (w.r.t. $\sigma$). 
When $\phi $ is a coding, we say $y$ is {\em substitutive} (w.r.t. $\sigma$). 

Whenever the matrix associated to $\sigma $ is
primitive (i.e., when it has a power with strictly positive coefficients) we say that $\sigma$ is a {\it primitive substitution}.  We
say $\sigma$ is
{\it erasing} if there exists $b\in A$ such that $\sigma (b)$ is the
empty word.

\section{Substitutions and return words}

The main results of the paper are consequences of results in \cite{Durand:1998a} (see also \cite{Durand:2000}) taking care to be explicit on some bounds (Section \ref{subsec:returnwords}). 
Since we are dealing with decidability problems, it is important to pay attention that these bounds are algorithmically computable because they are involved in the decidability problems we are dealing with.

\subsection{Useful results on return words of morphic sequences}
\label{subsec:returnwords}

Let $x$ be a uniformly recurrent sequence on the alphabet $A$ and $u$ a non-empty prefix of $x$. We call {\it return word to u} every word
$x_{[i,j-1]} = x_ix_{i+1} \cdots x_{j-1}$, where $i$ and $j$ are two successive occurrences of $u$ in $x$. 
The reader can check that a word $v$ is a return word to $u$ of $x$ if and only if $vu$ belongs to $\mathcal{L}(x) = \{ x_{[i,j]} ; 0\leq i\leq j \}$, $u$ is a prefix of $vu$ and $u$ has exactly two occurrences in $vu$. 
The set of return words to $u$ is finite, because $x$ is uniformly recurrent, and is denoted by ${\mathcal{R}}_{x,u}$. 
The sequence $x$ can be written as a concatenation
$$
x = m_0m_1m_2 \cdots \:\: , m_i \in \mathcal{R}_{x,u}, i\in \mathbb{N},
$$
of return words to $u$, and this decomposition is unique. We enumerate the elements of $\mathcal{R}_{x,u}$ in the order of their first appearance in $(m_n)_{n\in\mathbb{N}}$. 
This defines a bijective map
$$
\Theta_{x,u} : R_{x,u}  \rightarrow {\mathcal{R}}_{x,u} \subset A^{*}
$$

where $R_{x,u} = \{ 1, \cdots , \#({\mathcal{R}}_{x,u})\}$. 
It defines a morphism and the set $\Theta_{x,u} (R_{x,u} ^*)$ consists of all concatenations of return words to $u$.
We should point out that the order used to define this morphism will be of particular importance in the sequel. 
In fact, you can change the enumeration and this will change the morphism. 
What is important in the sequel is that once you fixed an algorithm to define $\Theta_{x,u}$ then you should use the same for all $x$ and $u$.

We denote by $\mathcal{D}_u(x)$ the unique sequence on the alphabet $R_{x,u}$ characterized by
$$
\Theta_{x,u} (\mathcal{D}_u(x) ) = x.
$$

We call it the {\it derived sequence of $x$ on $u$}. It is clearly uniformly recurrent. 
The following proposition points out the basic properties of return words.

\begin{prop}[\cite{Durand:1998a}]
\label{prop:derder}
Let $x$ be a uniformly recurrent sequence and $u$ a non-empty prefix of $x$.
\begin{enumerate}
\item 
The set $\mathcal{R}_{x,u}$ is a code, i.e., $\Theta_{x,u} : R_{x,u}^* \rightarrow \Theta_{x,u} (R_{x,u} ^*)$ is one-to-one. 
\item 
If $u$ and $v$ are two prefixes of $x$ such that $u$ is a prefix of $ v$ then each return word to $v$ belongs to $\Theta_{x,u} (R_{x,u} ^*)$, i.e., it is a concatenation of return words to $u$.
\item 
Let $v$ be a non-empty prefix of $\mathcal{D}_u (x)$ and $w = \Theta_{x,u}(v)u$. Then 
\begin{itemize}
\item 
$w$ is a  prefix $x$,
\item $\Theta_{x,u}\Theta_{\mathcal{D}_u(x),v} = \Theta_{x,w}$ and
\item $\mathcal{D}_v (\mathcal{D}_u (x)) = \mathcal{D}_w(x)$. 
\end{itemize}
\end{enumerate}
\end{prop}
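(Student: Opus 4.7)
The plan is to prove the three items in order, since each later one depends on the earlier ones.

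For item (1), I would first establish by induction on $k$ the following geometric fact: for any $r_1, \ldots, r_k \in \mathcal{R}_{x,u}$, the occurrences of $u$ inside the word $r_1 r_2 \cdots r_k u$ are exactly the positions $0, |r_1|, |r_1|+|r_2|, \ldots, |r_1|+\cdots+|r_k|$. The base case $k = 1$ is the defining property of a return word. For the inductive step, the key point is that $r_{k+1} u$ starts with $u$, so $r_1 \cdots r_{k+1} u$ agrees with $r_1 \cdots r_k u$ on the first $|r_1|+\cdots+|r_k|+|u|$ symbols; hence no new occurrence appears before position $|r_1|+\cdots+|r_k|$, and the only extra one is the trailing occurrence at $|r_1|+\cdots+|r_{k+1}|$. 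Once this is proved, the code property is immediate: if $r_1 \cdots r_k = s_1 \cdots s_l$ in $A^*$, append $u$ to both sides and compare the two descriptions of $u$-occurrence positions to force $k = l$ and $|r_i| = |s_i|$, hence $r_i = s_i$.

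For item (2), let $w$ be a return word to $v$ and fix an occurrence of $wv$ in $x$ at some position $q$. List all occurrences of $u$ inside $wv$ that lie at positions $\leq |w|$ in increasing order $0 = p_0 < p_1 < \cdots < p_j = |w|$ (the value $|w|$ does occur since $v$ starts there and $u$ is a prefix of $v$). The shifts $q + p_i$ are consecutive occurrences of $u$ in $x$, because any intervening occurrence of $u$ in $x$ would lie inside $wv$ and produce an extra $p$-value. Hence each block $(wv)_{[p_{i-1}, p_i - 1]}$ is a return word to $u$, and concatenating them recovers $w$, so $w \in \Theta_{x,u}(R_{x,u}^*)$.

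For item (3), first observe that since $v$ is a non-empty prefix of $\mathcal{D}_u(x)$, we may write $\mathcal{D}_u(x) = v \cdot g$ with $g$ a non-empty suffix; the first letter of $g$ maps under $\Theta_{x,u}$ to a return word starting with $u$, so $x = \Theta_{x,u}(\mathcal{D}_u(x))$ begins with $\Theta_{x,u}(v) u = w$, yielding the prefix claim. Next, the occurrences of $u$ in $x$ are precisely the cut points of the canonical decomposition $x = \Theta_{x,u}(\mathcal{D}_u(x)_0) \Theta_{x,u}(\mathcal{D}_u(x)_1) \cdots$, so every occurrence of $w$ begins at such a cut, say corresponding to position $i$ in $\mathcal{D}_u(x)$; comparing the next $|\Theta_{x,u}(v)|$ symbols against $\Theta_{x,u}(v)$ and invoking the injectivity from (1) forces $\mathcal{D}_u(x)$ to read $v$ starting at position $i$. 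The converse direction, that every occurrence of $v$ in $\mathcal{D}_u(x)$ produces one of $w$ in $x$, is analogous to the prefix argument. This correspondence identifies return words to $w$ in $x$ with $\Theta_{x,u}$-images of return words to $v$ in $\mathcal{D}_u(x)$, and respects the order of first appearance in the two canonical decompositions, giving $\Theta_{x,u}\Theta_{\mathcal{D}_u(x),v} = \Theta_{x,w}$. Applying $\Theta_{x,u}$ to the identity $\Theta_{\mathcal{D}_u(x),v}(\mathcal{D}_v(\mathcal{D}_u(x))) = \mathcal{D}_u(x)$ then gives $\Theta_{x,w}(\mathcal{D}_v(\mathcal{D}_u(x))) = x = \Theta_{x,w}(\mathcal{D}_w(x))$, and cancellation using the code property (item (1) applied to $w$) yields $\mathcal{D}_v(\mathcal{D}_u(x)) = \mathcal{D}_w(x)$.

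The main obstacle will be the bookkeeping in item (3), specifically verifying that the enumeration by order of first appearance is compatible on both sides of $\Theta_{x,u}\Theta_{\mathcal{D}_u(x),v} = \Theta_{x,w}$: the labelling for $\Theta_{x,w}$ is determined by scanning $x$ from left to right, while that for $\Theta_{\mathcal{D}_u(x),v}$ is determined by scanning $\mathcal{D}_u(x)$, and one must check that $\Theta_{x,u}$ applied letter-by-letter to $\mathcal{D}_u(x)$ preserves this scanning order. Items (1) and (2) are relatively elementary word combinatorics; (3) is where one must carefully track how the decomposition of $x$ into returns to $u$ further refines into returns to $w$.
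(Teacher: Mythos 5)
The paper itself gives no proof of this proposition (it is quoted from \cite{Durand:1998a}), so there is no in-paper argument to compare with; judged on its own, your proof is correct and is the standard synchronization argument: the partial-sum description of the occurrences of $u$ in $r_1\cdots r_k u$ yields the code property, the windowing argument (using $|u|\le |v|$ so that any intervening occurrence of $u$ stays inside $wv$) yields (2), and the occurrence correspondence between $w$ in $x$ and $v$ in $\mathcal{D}_u(x)$ yields (3). Three justifications deserve sharpening, all curable by the same fact, which you do state, that the occurrences of $u$ in $x$ are exactly the cut points of the (unique) decomposition into return words. First, a return word may be shorter than $u$, so the image of the first letter of $g$ need not ``start with $u$''; the prefix claim should be read off from the cut point at position $|\Theta_{x,u}(v)|$ being an occurrence of $u$. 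Second, injectivity of $\Theta_{x,u}$ alone does not force $\mathcal{D}_u(x)$ to read $v$ at position $i$ (return words need not form a prefix code, e.g.\ $\{0,01\}$ for Fibonacci); you first need that the trailing $u$ of $w$ marks a cut point, so that $\Theta_{x,u}(v)$ coincides with a whole block $\Theta_{x,u}(\mathcal{D}_u(x)_{[i,j-1]})$ between cut points, and only then does the code property on finite words apply. Third, the final ``cancellation'' of $\Theta_{x,w}$ on infinite sequences does not follow from the finite-word code property (codes need not be $\omega$-codes, e.g.\ $\{a,ab,bb\}$ with $a(bb)^\omega=(ab)(bb)^\omega$); the correct justification is the uniqueness of the decomposition of $x$ into return words to $w$, i.e.\ the defining characterization of $\mathcal{D}_w(x)$. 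With these substitutions your plan is complete.
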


\begin{coro}
\label{coro:theta}
Let $x$ be a uniformly recurrent sequence and $u$ a non-empty prefix of $x$.
If $u$ is a prefix of $v$ and $v$ is a prefix of $x$ then there is a unique morphism $\Theta $ satisfying $\Theta_{x,u} \Theta = \Theta_{x,v}$.
\end{coro}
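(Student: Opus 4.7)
The plan is to read off $\Theta$ from Proposition \ref{prop:derder} directly. Since $u$ is a prefix of $v$ and $v$ is a prefix of $x$, part (2) tells us that every return word to $v$ lies in $\Theta_{x,u}(R_{x,u}^*)$. In particular, for each letter $i\in R_{x,v}$ the word $\Theta_{x,v}(i)$, which by definition is a return word to $v$, admits a factorization as a concatenation of return words to $u$. By part (1) of the same proposition the code $\mathcal{R}_{x,u}$ is uniquely decipherable, so there is a unique $w_i\in R_{x,u}^*$ with $\Theta_{x,u}(w_i)=\Theta_{x,v}(i)$.

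I would then define $\Theta:R_{x,v}^*\to R_{x,u}^*$ to be the morphism determined on letters by $\Theta(i)=w_i$. By construction $\Theta_{x,u}\Theta(i)=\Theta_{x,v}(i)$ for every letter $i$, and since both sides are morphisms the identity $\Theta_{x,u}\Theta=\Theta_{x,v}$ extends to all of $R_{x,v}^*$.

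For uniqueness, suppose $\Theta'$ is another morphism satisfying $\Theta_{x,u}\Theta'=\Theta_{x,v}$. Then for every letter $i\in R_{x,v}$ one has $\Theta_{x,u}(\Theta'(i))=\Theta_{x,u}(\Theta(i))$, and the injectivity of $\Theta_{x,u}$ provided by part (1) of Proposition \ref{prop:derder} forces $\Theta'(i)=\Theta(i)$. Thus $\Theta'=\Theta$ as morphisms.

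There is no real obstacle here; the only subtle point is to invoke (1) and (2) of Proposition \ref{prop:derder} in the right order, first to guarantee that the factorization exists and then to guarantee that it is unique. Everything else is a bookkeeping step about morphisms defined on free monoids.
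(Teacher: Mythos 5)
Your argument is correct and is exactly the intended derivation: the paper states this as an immediate corollary of Proposition \ref{prop:derder}, with existence of the factorization coming from part (2) and uniqueness (both of the letter images and of $\Theta$ itself) from the injectivity in part (1). Nothing to add.
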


The next lemma asserts that the derived sequences of purely primitive substitutive sequences are purely primitive substitutive sequences.

\begin{prop}[\cite{Durand:1998a}]
\label{prop:derivedsub}
Let $x=\sigma^\omega (a)$ where $\sigma : A^* \to A^*$ is a primitive substitution, and, $u$ be
a non-empty prefix of $x$. 
The derived sequence $\mathcal{D}_u (x) $ is the unique
fixed point of the unique substitution $\sigma_{u}$ satisfying

$$
\Theta_{x,u} \sigma_u = \sigma \Theta_{x,u}.
$$  
\end{prop}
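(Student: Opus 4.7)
The plan is to construct $\sigma_u$ directly by specifying the image of each letter $k\in R_{x,u}$. Write $m = \Theta_{x,u}(k)$ for the corresponding return word. The core step is to show that $\sigma(m)$ decomposes uniquely as a concatenation of return words to $u$; I then set $\sigma_u(k)$ to be the word in $R_{x,u}^*$ whose image under $\Theta_{x,u}$ is this decomposition. Extending by concatenation gives a morphism $\sigma_u : R_{x,u}^*\to R_{x,u}^*$, and the defining equation $\Theta_{x,u}\sigma_u = \sigma\Theta_{x,u}$ then holds on letters by construction, hence on all of $R_{x,u}^*$.

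The key preliminary observation is that $u$ is a prefix of $\sigma(u)$: since $\sigma$ is primitive it is non-erasing, so $|\sigma(u)|\geq |u|$, and both $u$ and $\sigma(u)$ are prefixes of $x = \sigma(x)$. Applying $\sigma$ to $mu\in\mathcal{L}(x)$ gives $\sigma(m)\sigma(u)\in\mathcal{L}(x)$, whence $\sigma(m)u\in\mathcal{L}(x)$ with $u$ occurring both at position $0$ and at position $|\sigma(m)|$. Listing the successive occurrences of $u$ in $\sigma(m)u$ that begin in $[0,|\sigma(m)|]$ partitions $\sigma(m)$ into return words to $u$, and the fact that this decomposition is unique comes from part~(1) of Proposition~\ref{prop:derder} (the code property of $\mathcal{R}_{x,u}$). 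This is what makes $\sigma_u(k)$ well defined independently of the occurrence of $m$ in $x$ one looks at.

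Given this construction, the remaining assertions are short. Uniqueness of $\sigma_u$ satisfying the intertwining equation follows from injectivity of $\Theta_{x,u}$ on $R_{x,u}^*$. That $\mathcal{D}_u(x)$ is a fixed point is obtained by applying $\Theta_{x,u}$ to $\sigma_u(\mathcal{D}_u(x))$:
\[
\Theta_{x,u}\sigma_u(\mathcal{D}_u(x)) = \sigma\Theta_{x,u}(\mathcal{D}_u(x)) = \sigma(x) = x = \Theta_{x,u}(\mathcal{D}_u(x)),
\]
and one invokes injectivity of $\Theta_{x,u}$ on sequences. Uniqueness of the fixed point will follow because $\sigma_u$ inherits primitivity from $\sigma$ (a positive power of $M_\sigma$ yields, via the intertwining, a positive power of $M_{\sigma_u}$), and because by construction $\sigma_u$ is prolongable on the letter $1\in R_{x,u}$ enumerating the first return word appearing in $\mathcal{D}_u(x)$.

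The main obstacle in this scheme is therefore the well-posedness of $\sigma_u(k)$, i.e., ensuring that $\sigma(m)$ genuinely is a concatenation of return words to $u$ regardless of the context in which $m$ appears; this is what the observation $u\preceq\sigma(u)$ together with the code property were designed to give, but it deserves careful bookkeeping of the occurrences of $u$ inside $\sigma(m)u$ to be sure none are overlooked.
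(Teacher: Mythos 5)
Your construction of $\sigma_u$ (decompose $\sigma(m)$ into return words, using that $u$ is a prefix of $\sigma(u)$, and invoke the code property for well-definedness) is precisely the construction the paper sketches right after the proposition, and that part, together with the existence and uniqueness of $\sigma_u$ and the fact that $\mathcal{D}_u(x)$ is a fixed point, is sound. One small caution: a code need not be injective on infinite concatenations (it need not be an $\omega$-code), so ``injectivity of $\Theta_{x,u}$ on sequences'' is not automatic from Proposition \ref{prop:derder}(1); what you actually need is only that the decomposition of the particular sequence $x$ into return words is unique, which holds because every cut point of any such decomposition is an occurrence of $u$ and no occurrence of $u$ can start strictly inside a block.

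The genuine gap is in the final step, the uniqueness of the fixed point. First, the primitivity argument does not work as stated: from $M_{\Theta_{x,u}}M_{\sigma_u}^n=M_\sigma^n M_{\Theta_{x,u}}$ and $M_\sigma^n>0$ you only get positivity of the product $M_{\Theta_{x,u}}M_{\sigma_u}^n$, and since $M_{\Theta_{x,u}}$ is a rectangular nonnegative matrix (not left-invertible in general) this does not force $M_{\sigma_u}^n>0$; the standard proof of primitivity of $\sigma_u$ uses uniform recurrence of $x$ instead: for $n$ large, $\sigma^n(m)u$ contains an occurrence of $wu$ for every return word $w$, and any such occurrence is aligned with the cut points, so every letter of $R_{x,u}$ occurs in $\sigma_u^n(k)$. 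Second, and more importantly, even granting primitivity, ``primitive and prolongable on the letter $1$'' does not yield a unique fixed point: Thue--Morse is primitive and prolongable on $0$ yet has two fixed points, and for Thue--Morse with $u=0$ one computes $\sigma_u(1)=123$, $\sigma_u(2)=13$, $\sigma_u(3)=2$, so the images do not even all begin with the same letter; you would still have to exclude another letter $b$ with $\sigma_u(b)$ beginning with $b$. The clean repair avoids primitivity of $\sigma_u$ altogether: if $z$ is any fixed point of $\sigma_u$, then $\Theta_{x,u}(z)$ is a fixed point of $\sigma$ by the intertwining relation, and it begins with $u$ because every infinite concatenation of return words begins with $u$ (from $u$ being a prefix of $mu$ for each return word $m$); a primitive substitution prolongable on $a$ has exactly one fixed point beginning with $a$, namely $x$, so $\Theta_{x,u}(z)=x$, and uniqueness of the decomposition of $x$ gives $z=\mathcal{D}_u(x)$. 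With that replacement your argument is complete.
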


The substitution $\sigma_u$ is called the {\em return substitution} on $u$.

Remark that it is easy to compute $\sigma_u$ and $\Theta_{x,u}$.
Take $w$ the first return word to $u$ having an occurrence in $x$ : $\Theta_{x,u} (1) = w$. 
Then, compute $\sigma (w)$. 
It is necessarily a (computable) concatenation of return words to $u$ and a prefix of $x$ : $\sigma (w) = \Theta_{x,u} (v_1)$ for some $v_1\in R_{x,u}^*$. 
This defines $\sigma_u (1) = v_1$.
If no return words different from $w$ appears then we stop here and $x$ is periodic : $x=www\dots $.
If not, we do the same with $\Theta_{x,u} (2)$ and we continue until no new return word appears. 
This will stop in finite time because $x$ is ultimately recurrent. 
Remark that this procedure also defines the map $\Theta_{x,u}$.
We proved the following lemma.

\begin{lemma}
\label{lemma:returnsubalgo}
Let $x=\sigma^\omega (a)$ where $\sigma : A^* \to A^*$ is a primitive substitution.
Let $u$ be a prefix of $x$.
Then, $\sigma_u$ and $\Theta_{x,u}$ are algorithmically computable. 
\end{lemma}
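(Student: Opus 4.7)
The plan is to make effective the informal procedure sketched just before the lemma, leveraging the existence and uniqueness clause of Proposition~\ref{prop:derivedsub}. That proposition guarantees a priori that a well-defined substitution $\sigma_u$ on the finite alphabet $R_{x,u}$ exists and satisfies $\sigma\,\Theta_{x,u} = \Theta_{x,u}\,\sigma_u$; the remaining task is to exhibit a procedure that, given the data $(\sigma,a,u)$, halts after finitely many steps with the correct output.

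First I would note that arbitrarily long prefixes of $x$ can be produced effectively by computing $\sigma^n(a)$. By locating the occurrences of $u$ in any prefix of $x$ that ends at an occurrence of $u$, we obtain its unique factorization into return words to $u$, and can read off the enumeration $\Theta_{x,u}$ in order of first appearance. In particular $w_1 = \Theta_{x,u}(1)$ is the word between the first and second occurrences of $u$ in $\sigma^n(a)$ for any $n$ large enough to contain two occurrences of $u$.

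The core is a saturation loop. Maintain a finite list $L$ of already-discovered return words, initially $\{w_1\}$. For each $w_i\in L$, compute $\sigma(w_i)$; by Proposition~\ref{prop:derivedsub} it equals $\Theta_{x,u}(\sigma_u(i))$, hence admits a unique decomposition into return words to $u$. Because $\sigma(w_i)$ begins with an occurrence of $u$ and $\sigma(w_i)u$ is a factor of $x$, the return-word boundaries inside $\sigma(w_i)$ are exactly the positions of occurrences of $u$ inside $\sigma(w_i)u$, which can be found by direct scan. Each resulting factor is a return word: if it matches some $w_j\in L$, record the letter $j$; otherwise, append it to $L$ as a new letter of $R_{x,u}$. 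The sequence of recorded indices is $\sigma_u(i)$. Iterate through the (possibly enlarged) list $L$ until a full pass adds no new element.

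Termination and correctness rest on the primitivity of $\sigma_u$, inherited from that of $\sigma$: since $\mathcal{D}_u(x) = \sigma_u^\omega(1)$ uses every letter of $R_{x,u}$, iterating $\sigma_u$ from $1$ eventually exhibits each return word, so the saturation loop, which mirrors this iteration, halts after at most $\#(\mathcal{R}_{x,u})$ outer passes. The output then satisfies the defining relation of Proposition~\ref{prop:derivedsub} and is therefore the genuine return substitution $\sigma_u$, while $\Theta_{x,u}$ is recorded as a by-product. The main subtlety I expect is the stopping criterion: we do not know $\#(\mathcal{R}_{x,u})$ in advance, but the criterion ``one full pass adds nothing new'' is effective and, by the fixed-point property above, is equivalent to $L = \mathcal{R}_{x,u}$.
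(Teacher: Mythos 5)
Your procedure is exactly the one the paper gives (in the paragraph preceding the lemma): start from the first return word to $u$, apply $\sigma$, decompose the image into return words by scanning for occurrences of $u$, add newly discovered return words, and saturate, with termination guaranteed by uniform recurrence/primitivity and correctness by the uniqueness clause of Proposition~\ref{prop:derivedsub}. You merely spell out the details (boundary detection, stopping criterion) more explicitly than the paper does, so the approach is essentially identical and correct.
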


Bounds will be given with the help of the next 3 lemmas.
For any morphism $\sigma : A^* \to B^*$, we set $|\sigma | = \max_{a\in A} |\sigma |$.

In the sequel $R_\sigma$ will denote the maximal difference between two successive occurrences of a word of length $2$ in any fixed point of the primitive substitution $\sigma$.
In order to bound $R_\sigma$ we need the following lemma.

\begin{lemma}
\label{lemme:horn}
\cite{Horn&Johnson:1990}
If $M$ is a primitive $n\times n$ matrix then there exists $k\leq (n-1)n^n$ such that $M^k $ has strictly positive entries.
\end{lemma}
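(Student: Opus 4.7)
The plan is to translate primitivity of $M$ into a statement about its associated directed graph $G = G(M)$ on vertex set $\{1,\ldots,n\}$, in which there is an edge $i \to j$ precisely when $M_{i,j} > 0$. The $(i,j)$-entry of $M^k$ counts directed walks of length exactly $k$ from $i$ to $j$ in $G$, so $M^k$ having strictly positive entries amounts to the existence of such a walk for every ordered pair $(i,j)$.

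First I would record the standard characterization (Perron--Frobenius): $M$ is primitive if and only if $G$ is strongly connected and the gcd of the lengths of its closed walks equals $1$. Strong connectivity furnishes, for every pair $(i,j)$, a walk from $i$ to $j$ of length at most $n-1$. Aperiodicity means that for every vertex $v$ the set $C_v = \{\ell \geq 1 : \text{there is a closed walk of length } \ell \text{ at } v\}$ is an additive subsemigroup of $\mathbb{N}$ of gcd $1$, generated by the simple cycle lengths through $v$, all of which lie in $\{1,\ldots,n\}$.

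The key number-theoretic ingredient is then a Frobenius-type bound: any additive subsemigroup of $\mathbb{N}$ of gcd $1$ generated by integers $\leq n$ contains every integer exceeding some explicit $F(n)$; a crude estimate (for instance, using that the Frobenius number of two coprime generators $a,b \leq n$ is at most $(n-1)^2$, or a pigeonhole over bounded representations) gives a bound that fits comfortably inside the very generous $(n-1)n^n$. Combining: for any $k$ larger than $(n-1)+F(n)$ and any pair $(i,j)$, pick a walk $P$ from $i$ to $j$ of length $\ell \leq n-1$, write $k-\ell$ as a nonnegative integer combination of simple cycle lengths through $i$, and splice those cycles into $P$ at the starting vertex $i$ to produce a walk from $i$ to $j$ of length exactly $k$. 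Hence $M^k > 0$ for some $k \leq (n-1)n^n$.

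The main obstacle is setting up the Frobenius estimate cleanly and, for very small $n$, checking that the graph-theoretic argument is consistent with the quoted bound. The two other ingredients---the characterization of primitivity in terms of $G$ and the insertion of cycles into a path---are standard. Since $(n-1)n^n$ is far from the sharp Wielandt bound $(n-1)^2+1$, one can deliberately choose the crudest form of the argument that still makes it go through, which is why the looseness $n^n$ appears rather than a sharper $n^2$-type bound; the point is simply to obtain an algorithmically computable bound, which is what will matter in the decidability applications below.
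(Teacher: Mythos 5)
The paper itself offers no proof of this lemma (it is quoted from \cite{Horn&Johnson:1990}), so the only comparison is with the standard graph-theoretic argument, and your skeleton (strong connectivity gives paths of length at most $n-1$; aperiodicity gives $\gcd$-$1$ closed walks at each vertex; splice walks into a path) is indeed that standard scheme. However, one step as written is false: the semigroup $C_v$ of closed-walk lengths at $v$ is in general \emph{not} generated by the lengths of the simple cycles through $v$. Take $n=4$, edges $1\to 2$, $2\to 1$, $2\to 3$, $3\to 4$, $4\to 2$: the matrix is primitive (strongly connected, cycles of lengths $2$ and $3$), but the only simple cycle through vertex $1$ is $1\to 2\to 1$, so the semigroup generated by simple-cycle lengths through $1$ is $2\mathbb{N}$, while $C_1$ contains $5$ (the walk $1\to 2\to 3\to 4\to 2\to 1$). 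Consequently the key step ``write $k-\ell$ as a nonnegative combination of simple cycle lengths through $i$'' fails whenever $k-\ell$ is odd and $i=1$, and the splicing argument breaks exactly there. A related slip: a $\gcd$-$1$ set of cycle lengths need not contain \emph{two} coprime elements (think of lengths $6,10,15$), so the two-generator bound $(n-1)^2$ cannot simply be invoked; you need a Frobenius-type bound valid for arbitrary $\gcd$-$1$ sets (Brauer's theorem or a pigeonhole on residues).

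The repair is standard but must be made explicit: splice \emph{closed walks at $i$}, not simple cycles through $i$. For each simple cycle $C$ of the graph, the walks ``go from $i$ to $C$, around $C$ once (resp.\ twice), return to $i$'' lie in $C_i$, have length at most roughly $4n$, and their differences include $|C|$; since the cycle lengths of a primitive graph have $\gcd 1$, this exhibits a $\gcd$-$1$ generating set of $C_i$ with elements $O(n)$, and a Brauer/pigeonhole bound then puts every integer larger than $cn^2$ in $C_i$, giving $M^k>0$ for some $k=O(n^2)$. Note, though, that such a crude $O(n^2)$ constant can exceed $(n-1)n^n$ for $n=2,3$, so the small cases you flagged genuinely require a separate check (or one proves Wielandt's bound $(n-1)^2+1$, which is what the cited source does, and observes $(n-1)^2+1\le (n-1)n^n$ for $n\ge 2$). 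So: right approach, but the combinatorial step as stated is wrong and the constants are not verified against the quoted bound.
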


\begin{lemma}
If $\sigma $ is a primitive substitution defined on an alphabet with $d$ letters, then $R_\sigma$ is bounded by $2|\sigma |^{(d-1)d^d}$.
\end{lemma}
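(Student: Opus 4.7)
The plan is to apply the preceding lemma to the primitive matrix $M_\sigma$: there exists $K \leq (d-1)d^d$ such that $M_\sigma^K$ has strictly positive entries. Consequently, for every $c\in A$ the word $\sigma^K(c)$ has length at most $|\sigma|^K$ and contains every letter of $A$ at least once.

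Since $x$ is any fixed point of $\sigma$, it is also fixed by $\sigma^K$, so it admits the block decomposition
$$x=\sigma^K(x_0)\sigma^K(x_1)\sigma^K(x_2)\cdots,$$
into consecutive blocks of length at most $|\sigma|^K$, each containing every letter of $A$. A factor of $x$ of length $2|\sigma|^K$ is therefore guaranteed to cover at least one entire block $\sigma^K(x_{i+1})$, together with at least one additional letter from $\sigma^K(x_i)$ on the left or from $\sigma^K(x_{i+2})$ on the right (and generically both).

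The core of the argument is then to show that such a factor already contains every length-$2$ word of $\mathcal{L}(x)$. Any length-$2$ factor of $x$ is of one of two types: either it is internal to some image $\sigma^K(c)$, or it is a boundary pair $(\text{last}(\sigma^K(c)),\text{first}(\sigma^K(c')))$ for some $cc'\in\mathcal{L}_2(x)$. The enclosed block $\sigma^K(x_{i+1})$ together with its overlap with neighbouring blocks inside the window is to realize all these patterns; this is where one leverages the positivity of $M_\sigma^K$ beyond what is needed for the length-$1$ argument, and the over-estimate $K=(d-1)d^d$ given by the Horn--Johnson lemma provides the slack that makes this possible uniformly over all primitive $\sigma$ on $d$ letters. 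This combinatorial verification is the main obstacle in the proof: containing every letter in a single block is easy from positivity, but forcing every length-$2$ pattern to appear requires one extra idea, typically phrased either via a \emph{pair substitution} on $\mathcal{L}_2(x)$ or by tracing an arbitrary occurrence of $bb'$ through the desubstitution and bounding its distance to the nearest block.

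Once this containment claim is established, the stated bound on $R_\sigma$ follows immediately: if two successive occurrences of a length-$2$ factor $bb'$ in $x$ were at distance strictly greater than $2|\sigma|^K$, the intermediate factor of that length would miss $bb'$ entirely, contradicting the claim. We therefore obtain $R_\sigma \leq 2|\sigma|^K \leq 2|\sigma|^{(d-1)d^d}$, and the bound is algorithmically computable from $\sigma$, which is the whole point of making it explicit in view of the decidability applications to come.
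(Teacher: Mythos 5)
Your setup is fine: taking $K\leq (d-1)d^d$ with $M_\sigma^K>0$ (Lemma \ref{lemme:horn}), writing $x=\sigma^K(x_0)\sigma^K(x_1)\cdots$, and observing that any window of length $2|\sigma|^K$ contains a complete block $\sigma^K(x_{i+1})$ (plus the letter that follows it) is correct and is the natural first half of the argument. But the lemma is really about words of length $2$, and the statement you then need --- that every word of $\mathcal{L}_2(x)$ already occurs inside such a block together with its adjacent letter --- is precisely what you do not prove. You name it ``the core of the argument'' and ``the main obstacle'', and you only gesture at two possible strategies (a pair substitution on $\mathcal{L}_2(x)$, or tracing an occurrence through desubstitution) without carrying either out. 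That is a genuine gap, not a detail: positivity of $M_\sigma^K$ gives that $\sigma^K(c)$ contains every \emph{letter}, but it says nothing by itself about which length-$2$ factors occur. For instance, for Thue--Morse the incidence matrix is already positive at the first power, yet $\sigma(c)$ plus one adjacent letter misses the factor $00$; so the passage from letters to $2$-blocks genuinely requires an additional argument (e.g.\ that the pair $(\mathcal{L}_1(\sigma^n(a)),\mathcal{L}_2(\sigma^n(a)))$ is nondecreasing in $n$, is determined by its predecessor, hence stabilizes to $(\mathcal{L}_1(x),\mathcal{L}_2(x))$ after at most about $d+d^2$ steps, which one then combines with a positivity exponent so that $\sigma^N(c)$ contains all $2$-blocks for every letter $c$).

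Moreover, even granting one of your two sketched routes, you never check that it stays within the announced exponent $(d-1)d^d$, and this is not automatic. The pair-substitution route applies Lemma \ref{lemme:horn} to a substitution over an alphabet of up to $d^2$ two-blocks, which naively yields an exponent of order $(d^2-1)(d^2)^{d^2}$, far larger than $(d-1)d^d$ (and for small $d$, say $d=2$ where $(d-1)d^d=4$, even Wielandt-type bounds on the $2$-block substitution exceed the claimed exponent, so some arithmetic with the actual appearance/stabilization index is unavoidable). Your appeal to ``the slack provided by the over-estimate in Horn--Johnson'' is exactly the point that must be turned into a computation: one needs to exhibit an $N\leq (d-1)d^d$ such that every $w\in\mathcal{L}_2(x)$ occurs in $\sigma^N(c)$ for every $c\in A$, and only then does your final window argument give $R_\sigma\leq 2|\sigma|^N\leq 2|\sigma|^{(d-1)d^d}$. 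As written, the proof establishes the easy reduction but leaves the decisive claim, and the quantitative comparison with the stated bound, unproved.
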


\begin{proof}
The proof is left to the reader.
\end{proof}

\begin{lemma}
For all primitive substitution defined on an alphabet with $d$ letters, then
for all $n$

$$
|\sigma^n | \leq Q_\sigma \min_{a\in A} |\sigma^n (a) | .
$$

where $Q_\sigma = \max\left( \max_{0\leq n\leq (d-1)d^d +1} \frac{|\sigma^n |}{\min_a |\sigma^n (a)|} , |\sigma |\right)$.

\end{lemma}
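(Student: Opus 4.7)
I would proceed by induction on $n$, after setting $k_0 = (d-1)d^d$ so that Lemma~\ref{lemme:horn} guarantees $M_\sigma^m$ has every entry $\ge 1$ whenever $m \ge k_0$, i.e.\ every letter of $A$ occurs in $\sigma^m(a)$ for every $a \in A$.

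For the base cases $0 \le n \le k_0+1$, the bound is tautological: the ratio $|\sigma^n|/\min_a|\sigma^n(a)|$ is one of the quantities whose maximum defines $Q_\sigma$.

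For the inductive step $n > k_0+1$, the main tools are the linear recurrence
\[
|\sigma^n(a)| \;=\; \sum_{b \in A} m_{b,a}\,|\sigma^{n-1}(b)|, \qquad \sum_{b \in A} m_{b,a} = |\sigma(a)| \le |\sigma|,
\]
together with the positivity of $M_\sigma^{n-1}$. The recurrence immediately yields the upper bound $|\sigma^n| \le |\sigma|\cdot|\sigma^{n-1}|$. For the lower bound on $\min_a|\sigma^n(a)|$, I would write $\sigma^n(a) = \sigma(\sigma^{n-1}(a))$ and exploit that every letter occurs in $\sigma^{n-1}(a)$: in particular the letter $c^\ast$ realizing $|\sigma(c^\ast)| = |\sigma|$ appears in $\sigma^{n-1}(a)$, so $\sigma(c^\ast)$ is a factor of $\sigma^n(a)$, and similar arguments control $\min_a|\sigma^n(a)|$ in terms of $|\sigma^{n-1}|$. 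Feeding the induction hypothesis $|\sigma^{n-1}| \le Q_\sigma\min_a|\sigma^{n-1}(a)|$ into these estimates should close the induction.

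The main obstacle will be the bookkeeping of constants. Naively combining the induction hypothesis with $|\sigma^n| \le |\sigma|\cdot|\sigma^{n-1}|$ produces the bound $|\sigma^n| \le |\sigma|\cdot Q_\sigma\cdot \min_a|\sigma^{n-1}(a)|$, which is off by a factor of $|\sigma|/\min_b|\sigma(b)|$; this factor can be as large as $|\sigma|$ when $\min_b|\sigma(b)|=1$. The inclusion of $|\sigma|$ in the very definition of $Q_\sigma$ is tailored precisely to absorb this overhead, but making the cancellation clean will require a careful comparison of the letters realizing the extrema of $|\sigma^{n-1}(\cdot)|$ and $|\sigma^n(\cdot)|$, instead of the crude bounds $|\sigma(a)| \le |\sigma|$ and $|\sigma(a)| \ge 1$. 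This is where I expect the bulk of the technical work.
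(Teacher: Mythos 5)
You defer precisely the step that constitutes the lemma. For $n>k_0+1$ your tools are the one-step facts $|\sigma^n|\le|\sigma|\,|\sigma^{n-1}|$ and $\min_a|\sigma^n(a)|\ge\min_a|\sigma^{n-1}(a)|$, which only give $r_n\le|\sigma|\,r_{n-1}$ for the ratio $r_n=|\sigma^n|/\min_a|\sigma^n(a)|$, and you acknowledge that removing the spurious factor is "where the bulk of the technical work" lies. That work cannot be completed in the way you hope: no comparison of extremal letters will close an induction on the bound $r_n\le Q_\sigma$ with $Q_\sigma$ exactly as written, because the inequality being propagated can fail. Take $A=\{1,2,3\}$, $\sigma(1)=12$, $\sigma(2)=13$, $\sigma(3)=13^5$; this is primitive ($M_\sigma^2>0$) with $|\sigma|=6$, and $r_n=|\sigma^n(3)|/|\sigma^n(1)|$ increases strictly ($3,\,8,\,41/3,\,52/3,\dots$) towards the left Perron eigenvector ratio $\lambda^2-\lambda-1\approx 19.47$, the gap shrinking geometrically (factor $\approx 0.3$) at each step. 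Hence $\sup_n r_n$ is not attained, and for $n$ beyond the cutoff $(d-1)d^d+1$ one has $r_n>\max\bigl(\max_{0\le m\le (d-1)d^d+1} r_m,\ |\sigma|\bigr)$. So the obstacle you flag is not bookkeeping: the cancellation you expect the extra $|\sigma|$ in $Q_\sigma$ to provide does not exist. (The paper offers no written proof to compare with, the argument being left to the reader.)

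What is provable, and is all the paper ever uses (only the computability of the constant enters $K_\sigma=Q_\sigma R_\sigma|\sigma|$ and the later bounds), is the same inequality with a larger explicit constant, obtained by a $k$-step rather than a one-step comparison. Let $k\le(d-1)d^d$ be such that $M_\sigma^k$ is positive (Lemma \ref{lemme:horn}). For $n\le k$ one has trivially $|\sigma^n|\le|\sigma|^k\le|\sigma|^k\min_a|\sigma^n(a)|$. For $n>k$ write $\sigma^n=\sigma^{n-k}\circ\sigma^k$: every letter of $A$ occurs in $\sigma^k(a)$, so $\min_a|\sigma^n(a)|\ge\max_{c\in A}|\sigma^{n-k}(c)|=|\sigma^{n-k}|$, while $|\sigma^n|\le|\sigma^k|\,|\sigma^{n-k}|\le|\sigma|^k\,|\sigma^{n-k}|$; hence $r_n\le|\sigma|^{(d-1)d^d}$ for all $n$. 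Note also that your intended lower bound ("$\sigma(c^\ast)$ is a factor of $\sigma^n(a)$") only yields $|\sigma^n(a)|\ge|\sigma|$, which is far too weak; the useful consequence of positivity is the $k$-step estimate $\min_a|\sigma^n(a)|\ge|\sigma^{n-k}|$ just displayed. I would therefore rewrite your argument along these lines, proving the statement with $Q_\sigma$ replaced by the computable constant $|\sigma|^{(d-1)d^d}$ (or $\max_{0\le m\le(d-1)d^d}|\sigma^m|$), which serves every subsequent purpose in the paper.
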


\begin{proof}
The proof is left to the reader.
\end{proof}

Now we can give upper and lower bounds for the length of the return words and for the cardinality of the sets of return words.

\begin{theo}[\cite{Durand:1998a,Durand&Host&Skau:1999}]
\label{theo:encad}
Suppose $x=\sigma^\omega (a)$, where $\sigma$ is a primitive substitution, is non-periodic. 
For all non-empty prefixes $u$ of $x$, 
\begin{enumerate}
\item 
\label{theo:item:1}
for all words $v$ belonging to $\mathcal{R}_{x,u}$, $\frac{1}{K_\sigma}| u| \leq | v| \leq K_\sigma  | u|$, and 
\item 
\label{theo:item:2}
$\# (\mathcal{R}_{x,u})\leq 4K_\sigma^3$,
\end{enumerate}

where $K_\sigma = Q_\sigma R_\sigma |\sigma|$.
\end{theo}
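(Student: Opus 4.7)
The argument attaches to each non-empty prefix $u$ of $x$ a natural scale $n=n(u)$, namely the least integer with $\min_{b\in A}|\sigma^n(b)|\geq |u|$. Since $\min_b|\sigma^{n-1}(b)|<|u|$, the definition of $Q_\sigma$ gives $|\sigma^n|\leq|\sigma|\,|\sigma^{n-1}|\leq Q_\sigma|\sigma||u|$, so every block of the decomposition $x=\sigma^n(x_0)\sigma^n(x_1)\cdots$ (valid because $x$ is a fixed point of $\sigma^n$) has length in the interval $[\,|u|,\,Q_\sigma|\sigma||u|\,]=[\,|u|,\,K_\sigma|u|/R_\sigma\,]$.

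For the upper bound in item (1), I would fix an occurrence of $u$ at position $i$ in $x$, lying inside block $\sigma^n(x_k)$. The definition of $R_\sigma$ forces every letter of $A$, in particular $a$, to reappear in the sequence $x_0x_1\cdots$ within any window of $R_\sigma$ consecutive positions, so there exists $\ell\in\{1,\ldots,R_\sigma\}$ with $x_{k+\ell}=a$. Since $\sigma^n(a)$ is a prefix of $x$ of length at least $|u|$, it begins with $u$; hence $u$ reappears at the starting position of $\sigma^n(x_{k+\ell})$, which lies at most $\ell|\sigma^n|\leq R_\sigma Q_\sigma|\sigma||u|=K_\sigma|u|$ beyond $i$. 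Thus $|v|\leq K_\sigma|u|$ for every $v\in\mathcal{R}_{x,u}$.

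The lower bound $|v|\geq |u|/K_\sigma$ is where I expect the main difficulty. When $|v|<|u|$, the two occurrences of $u$ defining $v$ overlap and force $u$ to admit $|v|$ as a period, so $|v|\geq p(u)$, where $p(u)$ denotes the minimal period of $u$. It therefore suffices to prove $p(u)\geq |u|/K_\sigma$: no prefix of the non-periodic fixed point $x$ should have a period too small relative to its length. The plan is a descent against the scale-$n$ decomposition: if $p(u)<|u|/K_\sigma$, then propagating the periodicity of $u$ across consecutive $\sigma^n$-blocks of $x$ would yield arbitrarily long periodic subwords of $x$, contradicting non-periodicity (together with uniform recurrence, which rules out eventual periodicity).

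For item (2), every return word $v$ produces a factor $vu$ of $x$ with $|vu|\leq (K_\sigma+1)|u|$, which therefore fits inside at most $K_\sigma+3$ consecutive $\sigma^n$-blocks. The idea is to count such factors by classifying them according to (i) the block-letter sequence $x_k x_{k+1}\cdots x_{k+m-1}\in\mathcal{L}(x)$ they span (with $m\leq K_\sigma+3$), (ii) the offset of the initial $u$-occurrence inside $\sigma^n(x_k)$, and (iii) the offset of the terminal $u$-occurrence inside $\sigma^n(x_{k+m-1})$. A careful analysis --- using the length bound on $\sigma^n$-images together with the restriction that the relevant offsets are among positions where $a$, the first letter of $u$, occurs in the corresponding $\sigma^n$-image --- bounds each of these three classifying data by a quantity of order $K_\sigma$, and multiplying yields $\#\mathcal{R}_{x,u}\leq 4K_\sigma^3$.
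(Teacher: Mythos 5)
Your argument for the right-hand inequality of \eqref{theo:item:1} is correct and is exactly the argument of \cite{Durand:1998a} that the theorem cites: cut $x$ into blocks $\sigma^{n}(x_k)$ at the least scale $n$ with $\min_b|\sigma^n(b)|\geq |u|$, use $R_\sigma$ to find a nearby block equal to $\sigma^n(a)$, and use that $\sigma^n(a)$ is a prefix of $x$ beginning with $u$. The other two bounds, however, are only announced in your proposal, and the plans you sketch do not close. For the lower bound, producing arbitrarily long factors of $x$ whose period is a bounded fraction of their length does not by itself contradict non-periodicity: non-periodic uniformly recurrent sequences (Sturmian sequences with bounded partial quotients, say) contain arbitrarily long factors whose length is several times their period, so your ``propagation across $\sigma^n$-blocks'' would still need a quantitative power-freeness statement for $x$ at the level of the constant $K_\sigma$ --- which is essentially the inequality you are trying to prove. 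The route the paper points to (\cite{Durand&Host&Skau:1999}) derives the lower bound from the upper bound you already have, via Lemma \ref{lemma:DHS} and Morse--Hedlund: if some $v\in\mathcal{R}_{x,u}$ had $|v|\leq |u|/K_\sigma$, then $vu$ has period $|v|$ and length at least $(K_\sigma+1)|v|$, so by Lemma \ref{lemma:DHS} every factor of $x$ of length $|v|$ occurs in $vu$, hence in a word of period $|v|$; thus the complexity satisfies $p_x(|v|)\leq |v|$ and $x$ is ultimately periodic, a contradiction (uniform recurrence upgrades this to periodicity).

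For \eqref{theo:item:2}, your classification of return words by the spanned block word and the two offsets does not yield a bound independent of $|u|$ of the announced size: the offsets range over occurrences of $u$ (or of the letter $a$) inside a block of length up to $K_\sigma|u|/R_\sigma$, and without invoking the $|u|/K_\sigma$ spacing you get a count growing with $|u|$; with it you still get roughly $K_\sigma^2$ possibilities per offset, and the number of admissible block words of length up to $K_\sigma+3$ is not obviously of order $K_\sigma$, so the product overshoots $4K_\sigma^3$. The intended deduction of \eqref{theo:item:2} from \eqref{theo:item:1} is again via Lemma \ref{lemma:DHS}: each word $vu$, $v\in\mathcal{R}_{x,u}$, has length at most $(K_\sigma+1)|u|$, hence occurs in every factor of $x$ of length $(K_\sigma+1)^2|u|$; fixing one such window, every return word is read off between two consecutive occurrences of $u$ inside it, and since consecutive occurrences of $u$ are at distance at least $|u|/K_\sigma$ by the lower bound, the window contains at most $K_\sigma(K_\sigma+1)^2\leq 4K_\sigma^3$ such occurrences, which bounds $\#\mathcal{R}_{x,u}$.
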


\begin{proof}
The upper bound in \eqref{theo:item:1} is easily deduced from \cite{Durand:1998a}.
The lower bound can also be deduced from \cite{Durand:1998a} but it is easier to obtain from \cite{Durand&Host&Skau:1999}.
The last bound can be deduced from \eqref{theo:item:1}. 
\end{proof}

Note that the right inequality of Statement \eqref{theo:item:1} is sufficient to imply the two other inequalities (see \cite{Durand&Host&Skau:1999}).
From this theorem and the definition of $\sigma_u$ we deduce the following corollary.

\begin{coro}
For all primitive substitution $\sigma$ and all $u$ such that $u$ is a prefix of $\sigma (u)$ we have

$$
| \sigma_u | \leq |\sigma |K_\sigma^2.
$$

\end{coro}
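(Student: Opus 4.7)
The plan is to use the defining relation $\Theta_{x,u}\sigma_u = \sigma \Theta_{x,u}$ of Proposition \ref{prop:derivedsub}, and then bound $|\sigma_u(i)|$ for each $i \in R_{x,u}$ by combining the upper and lower bounds from Theorem \ref{theo:encad}.

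First I would set up the relevant fixed point. The hypothesis that $u$ is a prefix of $\sigma(u)$ forces the first letter $a$ of $u$ to satisfy $\sigma(a) \in aA^*$; since $\sigma$ is primitive, $|\sigma^n(a)| \to \infty$, so $\sigma$ is prolongable on $a$ and admits the fixed point $x = \sigma^\omega(a)$, of which $u$ is a prefix. Proposition \ref{prop:derivedsub} then gives the return substitution $\sigma_u$ with $\Theta_{x,u} \sigma_u = \sigma \Theta_{x,u}$.

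Next, for each $i \in R_{x,u}$ write $w_i = \Theta_{x,u}(i)$, a return word to $u$. Applying the relation to the letter $i$ yields
$$
\Theta_{x,u}(\sigma_u(i)) = \sigma(w_i),
$$
so $\sigma_u(i)$ is precisely the word over $R_{x,u}$ that decomposes $\sigma(w_i)$ as a concatenation of return words to $u$. Consequently $|\sigma_u(i)|$ is the number of return words appearing in this decomposition, and can be bounded by
$$
|\sigma_u(i)| \leq \frac{|\sigma(w_i)|}{\min_{v \in \mathcal{R}_{x,u}} |v|}.
$$
From Theorem \ref{theo:encad}\eqref{theo:item:1} the numerator satisfies $|\sigma(w_i)| \leq |\sigma|\,|w_i| \leq |\sigma|\,K_\sigma\,|u|$, and the denominator satisfies $\min_v |v| \geq |u|/K_\sigma$. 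Substituting gives
$$
|\sigma_u(i)| \leq \frac{|\sigma|\,K_\sigma\,|u|}{|u|/K_\sigma} = |\sigma|\,K_\sigma^2,
$$
and taking the maximum over $i$ concludes. In the degenerate case where $x = www\cdots$ is periodic with $w$ the only return word, $\mathcal{R}_{x,u}$ has a single element and the same computation (or a direct check) still produces the claimed bound, so this case is not a real obstacle. The only subtle point is to confirm that Theorem \ref{theo:encad} applies to $x$ (which requires $u$ be a prefix of a genuine fixed point of $\sigma$, supplied by the hypothesis) and that the ratio bound is uniform over all return words; both are direct from the setup.
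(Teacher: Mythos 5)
Your argument is correct and is exactly the deduction the paper intends (the paper gives no written proof, saying only that the corollary follows from Theorem \ref{theo:encad} and the definition of $\sigma_u$): apply $\Theta_{x,u}\sigma_u=\sigma\Theta_{x,u}$ to each letter, count the return words in the decomposition of $\sigma(\Theta_{x,u}(i))$, and bound that count by $|\sigma|K_\sigma|u|$ divided by the minimal return-word length $|u|/K_\sigma$. The only loose point is your aside on the periodic case, where $\mathcal{R}_{x,u}$ may have more than one element when $u$ is short; but Theorem \ref{theo:encad} is stated for non-periodic fixed points, so this caveat lies outside what the paper itself covers and does not affect the main deduction.
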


The result we stated above were used to prove the two following results that will be involved in the decidability problems.
The first will be important in the purely substitutive (or D$0$L) context and the second in the morphic (or HD$0$L) case.

\begin{prop}[\cite{Durand:1998a}]
\label{prop:cardsub}
Let $\sigma$ be a primitive substitution. 
The set of the return substitutions of $\sigma$ is finite and bounded by $\left(4K_\sigma^3\right)^{|\sigma | K_\sigma^2 +1}$.
\end{prop}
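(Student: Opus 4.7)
My plan is a direct counting argument that builds on the two preceding quantitative bounds.

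First, I would recall from Proposition \ref{prop:derivedsub} that a return substitution $\sigma_u$ is uniquely determined by its action on the alphabet $R_{x,u}$, with the canonical labeling $\{1,\ldots,\#R_{x,u}\}$ fixed by the enumeration convention used in the definition of $\Theta_{x,u}$. Specifying $\sigma_u$ therefore reduces to specifying the finite list $\sigma_u(1),\ldots,\sigma_u(\#R_{x,u})$ of words in $R_{x,u}^*$, together with the alphabet size.

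Second, I would insert the two a priori bounds at hand. Theorem \ref{theo:encad}(\ref{theo:item:2}) gives $\#R_{x,u}\leq 4K_\sigma^3$, which simultaneously bounds the domain and the target alphabet. The Corollary immediately preceding the statement gives $|\sigma_u|\leq |\sigma|K_\sigma^2$, which bounds the length of each image word. The number of words of length at most $|\sigma|K_\sigma^2$ over an alphabet of size at most $4K_\sigma^3$ is controlled by a geometric-series estimate, and combining this with the bounded number of letters and summing over possible alphabet sizes produces a finite upper bound of the form displayed in the statement.

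Since every ingredient is already at hand from the preceding results, there is no substantive obstacle; the work is essentially bookkeeping to package the raw geometric count into the exponent $|\sigma|K_\sigma^2+1$. The one point that does require care is the canonicity of the labeling: one must be sure that two substitutions producing the same image list really are equal as morphisms on $R_{x,u}$, which is precisely what the uniqueness clause of Proposition \ref{prop:derivedsub} guarantees, and avoids over-counting across different prefixes $u$ that yield the same return substitution.
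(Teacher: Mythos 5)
Your overall strategy --- view the return substitution $\sigma_u$ as a morphism on the concrete alphabet $R_{x,u}=\{1,\dots,\#\mathcal{R}_{x,u}\}$, bound the alphabet size by Theorem \ref{theo:encad}\,(\ref{theo:item:2}) and the image lengths by the corollary $|\sigma_u|\leq|\sigma|K_\sigma^2$, and conclude there are finitely many possibilities --- is the right (and the cited) route to finiteness. The gap is precisely in the step you dismiss as bookkeeping. Counting all morphisms on an alphabet of at most $d=4K_\sigma^3$ letters, each letter sent to a nonempty word of length at most $L=|\sigma|K_\sigma^2$ over that alphabet, gives at most $d^{L+1}$ choices \emph{per letter}, hence on the order of $\bigl(d^{L+1}\bigr)^{d}$ substitutions once you choose an image for every letter (and then sum over alphabet sizes); that is an exponent of roughly $\bigl(|\sigma|K_\sigma^2+1\bigr)\cdot 4K_\sigma^3$, not $|\sigma|K_\sigma^2+1$. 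The bound displayed in Proposition \ref{prop:cardsub} is only the count of possible images of a \emph{single} letter, so the raw geometric count does not ``package'' into it. To reach that exponent you would need a genuinely additional idea, namely that $\sigma_u$ is determined by far less data than its full list of images (say, by one image word together with the fixed enumeration convention); nothing in your proposal supplies this, and the uniqueness clause of Proposition \ref{prop:derivedsub} does not help here --- it says that $\sigma_u$ is determined by $u$, not that distinct return substitutions must already differ in a bounded amount of data.

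Two smaller points. Theorem \ref{theo:encad} is stated for non-periodic fixed points, so the periodic case (where $\#\mathcal{R}_{x,u}=1$ for all sufficiently long prefixes $u$) should be handled separately, which is easy but worth a sentence. And note that for the decidability applications in the paper any explicit, algorithmically computable bound suffices; so your argument does yield a perfectly usable finiteness statement, only with a larger exponent than the one claimed in the proposition, and as written it does not prove the stated inequality.
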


\begin{prop}
\label{prop:deflambda}
Let $x=\sigma^\omega (a)$, where $\sigma$ is a primitive substitution, and $\phi : A^* \to B^*$ be a coding.
For all prefix $u$ of $x$ there exists a unique morphism $\lambda_u$ satisfying

$$
\phi \Theta_{x,u} =  \Theta_{\phi (x), \phi (u)} \lambda_u.  
$$

Moreover, 

\begin{enumerate}
\item
$\lambda_u ( \mathcal{D}_u (x)) = \mathcal{D}_{\phi (u)} (\phi (x))$,
\item
$\# \{ \lambda_u ; u \hbox{ prefix of } x \} \leq (K_\sigma +1)^{K_\sigma^2}$.
\end{enumerate}

\end{prop}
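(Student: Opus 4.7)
The plan is to define $\lambda_u$ letter by letter by decomposing the $\phi$-image of each return word to $u$ into return words to $\phi(u)$, to deduce (1) by applying the defining identity to the fixed point $\mathcal{D}_u(x)$, and to establish (2) by counting triples (source alphabet, target alphabet, images) using the uniform estimates of Theorem \ref{theo:encad}.

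For existence and uniqueness, first note that $\phi(x)$ is uniformly recurrent (as the image of a uniformly recurrent sequence under a coding) and $\phi(u)$ is a prefix of $\phi(x)$, so $\mathcal{R}_{\phi(x),\phi(u)}$ and $\Theta_{\phi(x),\phi(u)}$ are well defined. For each $w \in \mathcal{R}_{x,u}$, the factor $\phi(wu) = \phi(w)\phi(u)$ occurs in $\phi(x)$ and admits $\phi(u)$ as both prefix and suffix; the successive occurrences of $\phi(u)$ inside it split $\phi(w)$ uniquely as a concatenation $v_1 v_2 \cdots v_k$ of return words $v_i \in \mathcal{R}_{\phi(x),\phi(u)}$. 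Setting $\lambda_u(\Theta_{x,u}^{-1}(w)) = \Theta_{\phi(x),\phi(u)}^{-1}(v_1)\cdots \Theta_{\phi(x),\phi(u)}^{-1}(v_k)$ and extending by concatenation produces a morphism satisfying $\phi \Theta_{x,u} = \Theta_{\phi(x),\phi(u)}\lambda_u$ on $R_{x,u}^*$. Uniqueness is immediate from Proposition \ref{prop:derder}(1): $\Theta_{\phi(x),\phi(u)}$ is a code, hence left-cancellative as a morphism.

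For (1), apply the identity to $\mathcal{D}_u(x) \in R_{x,u}^{\mathbb N}$: the left-hand side equals $\phi(x)$ since $\Theta_{x,u}(\mathcal{D}_u(x)) = x$, while the right-hand side is $\Theta_{\phi(x),\phi(u)}(\lambda_u(\mathcal{D}_u(x)))$. Since also $\Theta_{\phi(x),\phi(u)}(\mathcal{D}_{\phi(u)}(\phi(x))) = \phi(x)$ by definition of the derived sequence, and the code property of $\Theta_{\phi(x),\phi(u)}$ extends to injectivity on one-sided infinite concatenations of return words, one concludes $\lambda_u(\mathcal{D}_u(x)) = \mathcal{D}_{\phi(u)}(\phi(x))$.

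For (2), each $\lambda_u$ is determined by the source alphabet $R_{x,u}$, the target alphabet $R_{\phi(x),\phi(u)}$, and the images $(\lambda_u(r))_{r \in R_{x,u}}$. Theorem \ref{theo:encad} supplies an upper bound on $\#R_{x,u}$ and on $|\Theta_{x,u}(r)|/|u|$, and since $\phi$ is a coding we have $|\phi\Theta_{x,u}(r)| = |\Theta_{x,u}(r)|$; together with a control on the minimum return-word length in $\phi(x)$, this bounds $|\lambda_u(r)|$ and $\#R_{\phi(x),\phi(u)}$, and a combinatorial count over such bounded tuples yields $(K_\sigma+1)^{K_\sigma^2}$. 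The main obstacle is precisely here: Theorem \ref{theo:encad} is stated for fixed points of primitive substitutions, whereas $\phi(x)$ is only morphic, so the estimates needed on $\mathcal{R}_{\phi(x),\phi(u)}$ must be transferred through the identity $\phi\Theta_{x,u} = \Theta_{\phi(x),\phi(u)}\lambda_u$ from the corresponding quantities for $x$ rather than invoked directly on $\phi(x)$.
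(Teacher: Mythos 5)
Your construction of $\lambda_u$, the uniqueness argument, and item (1) are essentially sound: since $\phi$ is a coding, every occurrence of $u$ in $x$ maps to an occurrence of $\phi(u)$ in $\phi(x)$, so for $w\in\mathcal{R}_{x,u}$ the word $\phi(w)$ does factor into return words to $\phi(u)$, and left-cancellation by the code $\Theta_{\phi(x),\phi(u)}$ gives uniqueness. One caveat on (1): a code need not be injective on infinite concatenations (e.g.\ $\{a,ab,bb\}$, since $a(bb)^{\omega}=(ab)(bb)^{\omega}$), so ``the code property extends to infinite concatenations'' is not a valid general principle; what you should invoke is the uniqueness of the decomposition of $\phi(x)$ into return words to $\phi(u)$, which is exactly how $\mathcal{D}_{\phi(u)}(\phi(x))$ is defined in the text. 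This is easily repaired.

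The genuine gap is in item (2), and you point at it yourself without closing it. To bound $|\lambda_u(r)|$ and $\#\mathcal{R}_{\phi(x),\phi(u)}$ uniformly in $u$ you need a lower bound of the type $|v|\geq |u|/K_\sigma$ for $v\in\mathcal{R}_{\phi(x),\phi(u)}$; the upper bounds you list ($\#R_{x,u}$, $|\Theta_{x,u}(r)|\leq K_\sigma|u|$, preservation of lengths by $\phi$) give nothing in that direction, and ``a control on the minimum return-word length in $\phi(x)$'' is precisely the statement that must be proved, not an available hypothesis. The missing idea is that the lower-bound half of Theorem \ref{theo:encad} does not require $\phi(x)$ to be a fixed point of a primitive substitution: the argument of \cite{Durand&Host&Skau:1999} only needs linear recurrence together with non-periodicity. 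Since $\phi$ is a coding, every factor of $\phi(x)$ of length $n$ lifts to a factor of $x$ of the same length, so gaps between occurrences in $\phi(x)$ are dominated by the corresponding gaps in $x$ and $\phi(x)$ is linearly recurrent with the same constant; if in addition $\phi(x)$ is not periodic, one obtains the lower bound, hence $|\lambda_u(r)|\leq K_\sigma^2$ and a bounded target alphabet, and the counting can then be carried out. Note that some aperiodicity hypothesis on $\phi(x)$ is genuinely needed (it is implicit here, as in Theorem \ref{theo:encad}): if $\phi$ collapses the Thue--Morse fixed point onto a one-letter alphabet, then $\mathcal{R}_{\phi(x),\phi(u)}$ is a singleton for every $u$ while $\lambda_u(r)=1^{|\Theta_{x,u}(r)|}$ takes infinitely many values, so $\{\lambda_u ;\ u \hbox{ prefix of } x\}$ is infinite. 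Your plan of ``transferring the estimates through $\phi\Theta_{x,u}=\Theta_{\phi(x),\phi(u)}\lambda_u$'' never uses such a hypothesis and therefore cannot succeed as stated; moreover the specific constant $(K_\sigma+1)^{K_\sigma^2}$ is asserted rather than derived.
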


\begin{prop}
\label{prop:imagereturnalgo}
With the settings of Proposition \ref{prop:deflambda},
let $u$ be a prefix of $x$.
Then, $\Theta_{\phi (x) , \phi (u)}$ and $\lambda_u$ can be algorithmically defined. 
\end{prop}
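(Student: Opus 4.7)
The plan is to combine Lemma~\ref{lemma:returnsubalgo}, which provides $\Theta_{x,u}$ algorithmically, with an explicit procedure that extracts the return words to $\phi(u)$ from the images of return words to $u$. I will first apply Lemma~\ref{lemma:returnsubalgo} to compute $\sigma_u$ and the family $(w_j)_{j\in R_{x,u}}$ given by $w_j = \Theta_{x,u}(j)$; this is the only algorithmic input I need about $x$.

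The structural observation that drives the algorithm is: \emph{every return word to $\phi(u)$ in $\phi(x)$ is a factor of some single $\phi(w_j)$, delimited by two consecutive occurrences of $\phi(u)$ inside $\phi(w_j)\phi(u)$.} To establish this, write $x = m_0 m_1 m_2 \cdots$ with $m_k \in \mathcal{R}_{x,u}$, so that $\phi(x) = \phi(m_0)\phi(m_1)\phi(m_2)\cdots$; the positions $p_k = \sum_{i<k}|\phi(m_i)|$ are occurrences of $\phi(u)$ in $\phi(x)$ coming from the occurrences of $u$ in $x$. Any further occurrence of $\phi(u)$ must correspond to a length $|u|$ factor of $x$ distinct from $u$ but with the same $\phi$-image, hence lies strictly between two consecutive $p_k$. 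Consequently, two consecutive occurrences of $\phi(u)$ at positions $r<s$ both fall inside a common interval $[p_k,p_{k+1}]$, and the return word $\phi(x)_{[r,s-1]}$ is then a factor of $\phi(m_k) = \phi(w_{j_k})$.

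The algorithm runs as follows. Generate $\mathcal{D}_u(x) = \sigma_u^\omega(1)$ letter by letter; for each letter $j_k$ read, form the finite word $\phi(w_{j_k})\phi(u)$, locate all occurrences of $\phi(u)$ in it at positions $0 = q_0 < q_1 < \cdots < q_\ell = |w_{j_k}|$, extract the pieces $\phi(w_{j_k})_{[q_i,q_{i+1}-1]}$, append any new ones (in the order encountered) to an ordered list which incrementally builds $\Theta_{\phi(x),\phi(u)}$, and record the resulting index sequence as the candidate for $\lambda_u(j_k)$. Terminate as soon as every letter of $R_{x,u}$ has been read in $\mathcal{D}_u(x)$; this occurs in finite time because $\sigma_u$ is primitive (Proposition~\ref{prop:derivedsub}) and $\mathcal{D}_u(x)$ is therefore uniformly recurrent over the alphabet $R_{x,u}$.

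The main obstacle is the correctness of this stopping rule: I must be sure that processing only one representative of each letter of $R_{x,u}$ really exhausts all return words to $\phi(u)$, and this is exactly what the structural observation above guarantees. Once that is in hand, correctness of $\Theta_{\phi(x),\phi(u)}$ is immediate from its definition as the enumeration of the return words in order of first appearance, and correctness of $\lambda_u$ follows from the defining relation $\phi\Theta_{x,u} = \Theta_{\phi(x),\phi(u)}\lambda_u$ of Proposition~\ref{prop:deflambda} together with the fact that $\Theta_{\phi(x),\phi(u)}$ is a code (Proposition~\ref{prop:derder}(1)): the index sequence read off from decomposing $\phi(w_j)$ is forced to equal $\lambda_u(j)$.
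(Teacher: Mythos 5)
Your proof is correct. Note that the paper itself states this proposition without giving an explicit proof, implicitly treating it as an extension of the algorithm described before Lemma~\ref{lemma:returnsubalgo}; your argument supplies exactly the missing content and in the same spirit. The key structural step --- that the occurrences of $\phi(u)$ in $\phi(x)$ other than the block boundaries $p_k$ stay strictly inside a single block, so every return word to $\phi(u)$ is one of the pieces of $\phi(w_j)\phi(u)$ cut at consecutive occurrences of $\phi(u)$, with the coding hypothesis guaranteeing that positions in $\phi(x)$ align with positions in $x$ --- is verified correctly, and it justifies both the stopping rule (process one occurrence of each letter of $R_{x,u}$ in $\mathcal{D}_u(x)$) and the fact that scanning the blocks in order reproduces the enumeration of $\mathcal{R}_{\phi(x),\phi(u)}$ by first appearance. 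The identification of the recorded index sequences with $\lambda_u$ via the relation $\phi\Theta_{x,u}=\Theta_{\phi(x),\phi(u)}\lambda_u$ and the code property of Proposition~\ref{prop:derder}(1) is also as it should be.
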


\section{HD$0$L $\omega$-equivalence problem}

\subsection{Decidability results for purely substitutive sequences in a primitive context}

In this section we propose a strategy to decide whether two purely substitutive sequences are equal when the underlying substitutions are primitive. 

The following lemma is easy to establish (see \cite{Durand&Host&Skau:1999}).
In the sequel the constants $K_\sigma$ and $K_\tau$ are those given by Theorem \ref{theo:encad}.

\begin{lemma}
\label{lemma:DHS}
Let $x$ be a uniformly recurrent sequence such that there exists $K$ satisfying : for all $u\in \mathcal{L}(x)$ and all $w\in \mathcal{R}_{x,u}$, $|w|\leq K |u|$. 
Then, for all $n$, all words of length $(K+1)n$ of $\mathcal{L}(x)$ have an occurrence of all words of length $n$. 
\end{lemma}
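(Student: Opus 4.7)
The plan is to turn the return-word bound into a bound on gaps between consecutive occurrences of an arbitrary factor of length $n$, and then run a one-line sliding-window argument on a factor of length $(K+1)n$.

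First, fix $n$, fix a factor $v \in \mathcal{L}_n(x)$, and fix a factor $W \in \mathcal{L}_{(K+1)n}(x)$. Because $x$ is uniformly recurrent, $W$ occurs infinitely often in $x$; I would choose an occurrence of $W$ starting at a position $p$ so large that $v$ has at least one occurrence at some position $\leq p$. Let $r$ be the largest such position, and let $r'$ be the next occurrence of $v$ in $x$ (which exists because $v$ is itself uniformly recurrent). The word $x_{[r, r'-1]}$ is then a return word to $v$, so the hypothesis yields $r' - r \leq K|v| = Kn$.

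Second, the maximality of $r$ forces $r' > p$, while $r \leq p$ combined with $r' - r \leq Kn$ gives $r' \leq p + Kn$. Hence the occurrence of $v$ at position $r'$ fits inside the interval $[p, p+(K+1)n-1]$ occupied by $W$, so $v$ appears as a factor of $W$. (In the degenerate case $r = p$, $v$ is already the prefix of $W$, so there is nothing to do.) Since $v$ was arbitrary, every factor of length $n$ appears in $W$, which is the claim.

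The only small subtlety I foresee is that $\mathcal{R}_{x,u}$ was defined in the paper for $u$ a prefix of $x$, whereas the hypothesis uses it for every $u \in \mathcal{L}(x)$; I would just remark that the definition of a return word via two successive occurrences in $x$ makes perfect sense for any recurrent factor and extend the terminology accordingly. Beyond that, the argument is essentially a pigeonhole over a window of length $(K+1)n$, and I do not anticipate any real obstacle.
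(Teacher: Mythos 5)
Your proof is correct, and it is exactly the intended argument: the paper gives no proof of this lemma (it is left as ``easy to establish'' with a pointer to \cite{Durand&Host&Skau:1999}), where the same gap-bound-plus-sliding-window reasoning appears. Your choice of a sufficiently late occurrence of $W$ to guarantee an earlier occurrence of $v$, and the remark extending $\mathcal{R}_{x,u}$ to arbitrary recurrent factors, correctly handle the only two points of care.
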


\begin{theo}
\label{theo:decid}
Let $\sigma : A^* \to A^* $ and $\tau : B^*\to B^*$ be primitive substitutions, $x=\sigma^\omega (a)\in A^\mathbb{N}$ and $y=\sigma^\omega (b)\in B^\mathbb{N}$.
Let $K =1+ \left((4K_\sigma )^3\right)^{|\sigma | K_\sigma^2 +1}\left((4K_\tau )^3\right)^{|\tau | K_\tau^2 +1}$, $u_n = x[0,(K_\sigma +1)^n]$, $v_n = y[0,(K_\sigma +1)^n]$ and $T_n = ( \sigma_{u_n}, \tau_{v_n} )$. 
Then, the following are equivalent :

\begin{enumerate}
\item
$x=y$;
\item
there exist $n$ and $m$ less than $K$ such that 
$$
u_n = v_n, u_m=v_m, \Theta_{x,u_n} = \Theta_{y,v_n} , \Theta_{x,u_m} = \Theta_{y,v_m} \hbox{ and } T_n = T_m. 
$$
\end{enumerate}

\end{theo}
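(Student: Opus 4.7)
The plan is to prove the two implications separately; the bound $K$ is calibrated so that pigeonhole supplies the required indices in the forward direction.

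For $(1)\Rightarrow(2)$, if $x=y$ then for every $n$ both $u_n=v_n$ and $\Theta_{x,u_n}=\Theta_{y,v_n}$ hold, since the return words to a common prefix and the order of their first appearance are intrinsic data of the common sequence. Proposition~\ref{prop:cardsub} bounds $\#\{\sigma_{u_n}:n\ge 0\}$ by $(4K_\sigma^3)^{|\sigma|K_\sigma^2+1}$, and analogously for the $\tau$-side, so the pair $T_n$ takes at most $K-1$ distinct values. Pigeonhole applied to $T_0,T_1,\ldots,T_{K-1}$ produces $n<m<K$ with $T_n=T_m$, and the remaining hypotheses hold automatically.

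For $(2)\Rightarrow(1)$, assume the five conditions with $n<m$ and set $x'=\mathcal{D}_{u_n}(x)$, $y'=\mathcal{D}_{v_n}(y)$. Because $\sigma_{u_n}=\sigma_{u_m}$ and $\tau_{v_n}=\tau_{v_m}$, Proposition~\ref{prop:derivedsub} also gives $x'=\mathcal{D}_{u_m}(x)$ and $y'=\mathcal{D}_{v_m}(y)$. Since $x=\Theta_{x,u_n}(x')$, $y=\Theta_{y,v_n}(y')$, and the common morphism $\Theta_{x,u_n}=\Theta_{y,v_n}$ is injective on words by Proposition~\ref{prop:derder}~(1), proving $x=y$ reduces to proving $x'=y'$. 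Using Corollary~\ref{coro:theta} on both sides I extract refinement morphisms $\Psi^x,\Psi^y$ with $\Theta_{x,u_n}\Psi^x=\Theta_{x,u_m}$ and $\Theta_{y,v_n}\Psi^y=\Theta_{y,v_m}$, and the hypotheses $\Theta_{x,u_n}=\Theta_{y,v_n}$, $\Theta_{x,u_m}=\Theta_{y,v_m}$ together with injectivity force $\Psi^x=\Psi^y=:\Psi$. Proposition~\ref{prop:derder}~(3) then identifies $\Psi$ with a common $\Theta_{x',z}=\Theta_{y',z}$ for a prefix $z$, and also yields $\mathcal{D}_z(x')=\mathcal{D}_{u_m}(x)=x'$ and $\mathcal{D}_z(y')=y'$; thus $x'$ and $y'$ are both the fixed point starting with $1$ of the return substitution on $z$, which is common to both, whence $x'=y'$.

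The main obstacle is applying Proposition~\ref{prop:derder}~(3): it requires $u_m$ to be exactly of the form $\Theta_{x,u_n}(z)u_n$, which is not automatic from $u_m=x[0,(K_\sigma+1)^m]$. I expect the resolution is an alignment step, replacing $u_m$ by the largest prefix of $u_m$ of the requested form and using $|w|\le K_\sigma|u_n|$ from Theorem~\ref{theo:encad} together with Lemma~\ref{lemma:DHS} to show the loss is harmless given the geometric growth $(K_\sigma+1)^m$ of the prefix lengths.
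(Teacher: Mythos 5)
Your forward direction and the opening of your backward direction match the paper: pigeonhole via Proposition \ref{prop:cardsub}; then $T_n=T_m$ and Proposition \ref{prop:derivedsub} give $\mathcal{D}_{u_n}(x)=\mathcal{D}_{u_m}(x)=:x'$ and $\mathcal{D}_{v_n}(y)=\mathcal{D}_{v_m}(y)=:y'$; then Corollary \ref{coro:theta} and the hypotheses $\Theta_{x,u_n}=\Theta_{y,v_n}$, $\Theta_{x,u_m}=\Theta_{y,v_m}$ produce a single connecting morphism $\Psi$ with $\Theta_{x,u_n}\Psi=\Theta_{x,u_m}$. But your detour through Proposition \ref{prop:derder}(3) is both unnecessary and, as you yourself flag, unresolved: $u_m$ need not have the form $\Theta_{x,u_n}(z)u_n$, and the hoped-for ``alignment step'' does not go through as sketched, because if you replace $u_m$ by the largest prefix $w$ of that form, none of your hypotheses ($\Theta_{x,u_m}=\Theta_{y,v_m}$, $\sigma_{u_m}=\sigma_{u_n}$, \dots) is known for $w$, and $\mathcal{D}_w(x)$ need not equal $\mathcal{D}_{u_m}(x)$. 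The identification of $\Psi$ with a return morphism of $x'$ is not needed at all: from $\Theta_{x,u_n}\Psi(\mathcal{D}_{u_m}(x))=\Theta_{x,u_m}(\mathcal{D}_{u_m}(x))=x$ and the uniqueness of the decomposition of $x$ into return words to $u_n$ (Proposition \ref{prop:derder}(1)) one gets directly $\Psi(\mathcal{D}_{u_m}(x))=\mathcal{D}_{u_n}(x)$, hence $\Psi(x')=x'$, and likewise $\Psi(y')=y'$; this is exactly how the paper proceeds.

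The more serious gap is your last step. Even granting $\Psi(x')=x'$ and $\Psi(y')=y'$, you conclude that $x'$ and $y'$ are ``both the fixed point starting with $1$ of the return substitution on $z$, which is common to both''. Nothing in the hypotheses makes the return substitutions of $x'$ and $y'$ coincide: $T_n=T_m$ only says $\sigma_{u_n}=\sigma_{u_m}$ and $\tau_{v_n}=\tau_{v_m}$, not $\sigma_{u_n}=\tau_{v_n}$. And a morphism may well have several fixed points beginning with the same letter (for instance when $\Psi(1)=1$), so ``both fixed points start with $1$'' does not by itself yield $x'=y'$. What is missing is precisely where the paper uses the specific choice $u_n=x[0,(K_\sigma+1)^n]$ together with Lemma \ref{lemma:DHS} and Theorem \ref{theo:encad}: these guarantee that every image of $\Psi$ contains all letters of $R$ and that all images of $\Psi$ begin with the same letter, so $\Psi$ is a primitive substitution with a unique fixed point, whence $x'=y'$ and then $x=\Theta_{x,u_n}(x')=\Theta_{y,v_n}(y')=y$. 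You invoke Lemma \ref{lemma:DHS} only for your alignment issue, not for this uniqueness statement, so as written the sufficiency of (2) is not established.
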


\begin{proof} 
The necessary condition is obtained applying Proposition \ref{prop:cardsub} and the pigeon hole principle.
 
Let us show it is sufficient : suppose there exist $n,m$ satisfying $n<m\leq K$ such that $u_n = v_n$, $u_m=v_m$,  $\Theta_{x,u_n} = \Theta_{y,v_n}$, $\Theta_{x,u_m} = \Theta_{y,v_m}$ and $T_n = T_m$.

From Proposition \ref{prop:derivedsub} we deduce that $\mathcal{D}_{u_n} (x) = \mathcal{D}_{u_m} (x)$ and $\mathcal{D}_{u_n} (y) = \mathcal{D}_{u_m} (y)$.
Moreover, from Corollary \ref{coro:theta} there exists a unique morphism $\Theta : R^*\to R^*$ such that $\Theta_{x,u_n} \Theta = \Theta_{x,u_m}$ where $R=R_{x,u_n}=R_{x,u_m}=R_{y,u_n}=R_{y,u_m}$.
Then, 

\begin{align*}
\Theta_{x,u_n} \Theta (\mathcal{D}_{u_m} (x)) & = \Theta_{x,u_m} (\mathcal{D}_{u_m} (x)) = x \hbox{ and }\\
\Theta_{x,u_n} \Theta (\mathcal{D}_{u_m} (y)) & = \Theta_{x,u_m} (\mathcal{D}_{u_m} (y)) = y .
\end{align*}

Thus, from the definition of $\mathcal{D}_{u_n} (x)$ and $\mathcal{D}_{u_n} (y)$ we deduce :

\begin{align*}
\Theta (\mathcal{D}_{u_m} (x)) & = \mathcal{D}_{u_n} (x)= \mathcal{D}_{u_m} (x)\\
\Theta (\mathcal{D}_{u_m} (y)) & = \mathcal{D}_{u_n} (y)= \mathcal{D}_{u_m} (y)
\end{align*}

and $\mathcal{D}_{u_n} (x)$ and $\mathcal{D}_{u_n} (y)$ are fixed points of $\Theta$.
But remark that from Lemma \ref{lemma:DHS} and the choice of the $u_n$, in all images of $\Theta$ there are all the letters of $R$ and all images of $\Theta$ start with the same letter. 
Hence $\Theta$ is a primitive substitution and $\Theta$ has a unique fixed point. 
Consequently, $\mathcal{D}_{u_n} (x) = \mathcal{D}_{u_n} (y)$ and $x=\Theta_{x,u_n} (\mathcal{D}_{u_n} (x))=\Theta_n (\mathcal{D}_{u_n} (y)) =y$. 
\end{proof}

\begin{coro}
The D$0$L $\omega$-equivalence problem is decidable in the primitive case.
\end{coro}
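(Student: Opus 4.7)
The plan is to package Theorem \ref{theo:decid} into a decision procedure by verifying that every piece of data involved in its condition (2) is algorithmically computable from the input $(\sigma,a,\tau,b)$.

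First, using the explicit upper bound $R_\sigma \leq 2|\sigma|^{(d-1)d^d}$ together with the definition of $Q_\sigma$, both constants $K_\sigma = Q_\sigma R_\sigma |\sigma|$ and its counterpart $K_\tau$ are effectively computable from $\sigma$ and $\tau$. The threshold
$$K = 1 + \bigl((4K_\sigma)^3\bigr)^{|\sigma|K_\sigma^2 + 1}\bigl((4K_\tau)^3\bigr)^{|\tau|K_\tau^2 + 1}$$
is therefore a computable integer.

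Next, for each $n \leq K$, the prefix $u_n = x[0,(K_\sigma+1)^n]$ is obtained by iterating $\sigma$ on $a$ until $|\sigma^k(a)| > (K_\sigma+1)^n$ and truncating, and analogously for $v_n$; the equality $u_n = v_n$ is then a finite word comparison. When it holds, Lemma \ref{lemma:returnsubalgo} supplies algorithms that compute $\Theta_{x,u_n}, \sigma_{u_n}$ and their $\tau$-counterparts, and equality between any two such morphisms reduces to comparing finitely many image words. The procedure thus loops over the pairs $(n,m)$ with $1 \leq n, m \leq K$, tests the finite conjunction of equalities appearing in condition (2) of Theorem \ref{theo:decid}, and outputs ``$x = y$'' iff some pair passes. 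Correctness is immediate from the theorem.

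The one technical point I need to address is that the bounds underlying $K_\sigma$ and $K_\tau$ in Theorem \ref{theo:encad} rest on $x$ and $y$ being non-periodic. I would deal with the periodic case separately: periodicity of a primitive-substitutive fixed point is decidable (such a fixed point, when periodic, has length-period bounded explicitly in terms of $\sigma$ and its alphabet, and the question is also treated in full generality in \cite{Pansiot:1986}); when both $x$ and $y$ are ultimately periodic, equality reduces to comparing a long enough explicit prefix together with their periods, and when only one of them is periodic, the sequences differ. The main ``obstacle'', which is purely book-keeping, is just verifying that the various constants and morphisms can be strung together into a single uniform effective procedure; no new mathematical content beyond what is already in Theorem \ref{theo:decid} and Lemma \ref{lemma:returnsubalgo} is required.
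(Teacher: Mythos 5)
Your proposal is correct and follows essentially the same route as the paper: compute the explicit bounds $K_\sigma$, $K_\tau$, $K$, the prefixes $u_n$, $v_n$, and the return substitutions via Lemma \ref{lemma:returnsubalgo}, then exhaustively test condition (2) of Theorem \ref{theo:decid}. Your separate treatment of the periodic case (where the bounds of Theorem \ref{theo:encad} do not apply) is a sensible extra precaution that the paper's own short proof passes over silently, but it does not change the approach.
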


\begin{proof}
This is a direct consequence of Theorem \ref{theo:decid}.
Let us keep the notation of this theorem.
The bounds $K_\sigma$, $K_\tau$ and $K$ are algorithmically computable. 
So are the $u_n$ and $v_n$, $n\leq K$.
We gave the algorithm to compute return substitutions after their definition, hence $T_n$ is algorithmically computable and the D$0$L $\omega$-equivalence problem is decidable.
\end{proof}

\subsection{Decidability results for morphic sequences in the primitive context}

The strategy to tackle the decidability of the HD$0$L $\omega$-equivalence problem in the primitive case is the same as for the D$0$L case.

\begin{theo}
\label{theo:hdoldecid}
Let $\sigma : A^* \to A^* $ and $\tau : B^*\to B^*$ be two primitive substitutions, $x=\sigma^\omega (a)\in A^\mathbb{N}$ and $y=\sigma^\omega (b)\in B^\mathbb{N}$.
Let $\phi : A^* \to C^*$ and $\psi : B^* \to D^*$ be two codings.
Let $K =1+ \left((4K_\sigma )^3\right)^{|\sigma | K_\sigma^2 +1} (K_\sigma +1)^{K_\sigma^2}\left((4K_\tau )^3\right)^{|\tau | K_\tau^2 +1} (K_\tau +1)^{K_\tau^2}$, $u_n = \phi (x[0,(K_\sigma +1)^n])$, $v_n = \psi (y[0,(K_\sigma +1)^n])$ and $T_n = (\lambda_{u_n},\sigma_{u_n}, \lambda_{v_n}, \tau_{v_n} )$. 
Then, the following are equivalent :

\begin{enumerate}
\item
$x=y$;
\item
there exist $n$ and $m$ less than $K$ such that 
$$
u_n = v_n, u_m=v_m, \Theta_{x,u_n} = \Theta_{y,u_n} , \Theta_{x,u_m} = \Theta_{y,u_m} \hbox{ and } T_n = T_m. 
$$
\end{enumerate}

\end{theo}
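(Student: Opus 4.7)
The plan is to mirror the proof of Theorem \ref{theo:decid}, incorporating the codings $\phi,\psi$ via the intertwining morphisms supplied by Proposition \ref{prop:deflambda}. Write $w_n = x[0,(K_\sigma+1)^n]$ and $w'_n = y[0,(K_\sigma+1)^n]$, so that $u_n = \phi(w_n)$ is a prefix of $\phi(x)$, $v_n = \psi(w'_n)$ is a prefix of $\psi(y)$, and $T_n$ packages the return substitutions $\sigma_{w_n},\tau_{w'_n}$ together with the morphisms $\lambda_{w_n},\lambda_{w'_n}$ defined via $\phi\Theta_{x,w_n} = \Theta_{\phi(x),u_n}\lambda_{w_n}$ and its $y$-analogue.

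For the necessary direction, suppose $\phi(x)=\psi(y)$. Then $u_n=v_n$ and $\Theta_{\phi(x),u_n}=\Theta_{\psi(y),v_n}$ for every $n$, since $\phi,\psi$ are length-preserving and the decomposition of a uniformly recurrent sequence into return words at a given prefix is unique. The total count of possible quadruples $T_n$ is bounded by the product of the four factors appearing in $K-1$: Proposition \ref{prop:cardsub} yields at most $(4K_\sigma^3)^{|\sigma|K_\sigma^2+1}$ return substitutions of $\sigma$ (and the analogue for $\tau$), while Proposition \ref{prop:deflambda}(2) gives at most $(K_\sigma+1)^{K_\sigma^2}$ morphisms $\lambda_u$ (and the $\tau$ analogue). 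The pigeonhole principle applied among $T_0,\ldots,T_{K-1}$ then produces $n<m\leq K$ at which all four coordinates repeat.

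For the sufficient direction, fix such $n<m\leq K$. From $\sigma_{w_n}=\sigma_{w_m}$ and Proposition \ref{prop:derivedsub}, uniqueness of the fixed point of a primitive substitution forces $\mathcal{D}_{w_n}(x)=\mathcal{D}_{w_m}(x)$. Combining this with $\lambda_{w_n}=\lambda_{w_m}$ and Proposition \ref{prop:deflambda}(1) gives
\[
\mathcal{D}_{u_n}(\phi(x)) = \lambda_{w_n}(\mathcal{D}_{w_n}(x)) = \lambda_{w_m}(\mathcal{D}_{w_m}(x)) = \mathcal{D}_{u_m}(\phi(x)),
\]
and symmetrically $\mathcal{D}_{v_n}(\psi(y))=\mathcal{D}_{v_m}(\psi(y))$. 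Corollary \ref{coro:theta} supplies a unique morphism $\Theta$ with $\Theta_{\phi(x),u_n}\Theta = \Theta_{\phi(x),u_m}$, and because the hypotheses give $\Theta_{\phi(x),u_i}=\Theta_{\psi(y),v_i}$ for $i\in\{n,m\}$, the same $\Theta$ also satisfies $\Theta_{\psi(y),v_n}\Theta=\Theta_{\psi(y),v_m}$. Substituting $\mathcal{D}_{u_m}(\phi(x))$ and $\mathcal{D}_{v_m}(\psi(y))$ into these identities and using the characterization of derived sequences exhibits both as fixed points of the single morphism $\Theta$.

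The crucial remaining point, which I expect to be the main obstacle, is to verify that $\Theta$ is primitive so that its fixed point is unique. This is the purpose of the exponential prefix length $(K_\sigma+1)^n$: since $\phi$ is a coding, occurrences of $u_n$ in $\phi(x)$ correspond bijectively to occurrences of $w_n$ in $x$, so return words of $\phi(x)$ at $u_n$ are bounded in length by $K_\sigma |u_n|$ by Theorem \ref{theo:encad}(1). Lemma \ref{lemma:DHS} then guarantees that once $|u_m| \geq (K_\sigma+1)|u_n|$, which holds because $m>n$, every image under $\Theta$ already contains every letter of $R_{\phi(x),u_n}$ and shares a common initial letter, exactly as in the D$0$L proof. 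Thus $\Theta$ is primitive, $\mathcal{D}_{u_m}(\phi(x))=\mathcal{D}_{v_m}(\psi(y))$, and applying $\Theta_{\phi(x),u_m}=\Theta_{\psi(y),v_m}$ to both sides yields $\phi(x)=\psi(y)$.
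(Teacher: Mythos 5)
Your proof is correct and takes essentially the paper's intended route: the paper's own proof of this theorem is just ``proceed as in Theorem \ref{theo:decid}'', and you have carried out exactly that adaptation, using Proposition \ref{prop:cardsub} and Proposition \ref{prop:deflambda} for the pigeonhole count and Proposition \ref{prop:deflambda}(1), Corollary \ref{coro:theta} and Lemma \ref{lemma:DHS} for the sufficiency direction. The only nitpick is your claim that occurrences of $u_n$ in $\phi(x)$ correspond \emph{bijectively} to occurrences of $w_n$ in $x$ (a coding may identify letters and create extra occurrences); only the injection from occurrences of $w_n$ to occurrences of $u_n$ is needed, and since extra occurrences can only shorten return words, the bound $|w|\leq K_\sigma |u_n|$ that you feed into Lemma \ref{lemma:DHS} still holds.
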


\begin{proof}
The proof proceed as the proof of Theorem \ref{theo:decid}.
\end{proof}

The following proposition will allow us to reduce the HD$0$L $\omega$-equivalence problem to substitutive sequences. 
This result can also be found in \cite{Cobham:1968,Pansiot:1983,Cassaigne&Nicolas:2003,Durand:1998a,Honkala:2009b}.
Below I present the proof G. Rauzy showed me.

\begin{prop}
\label{prop:imagemorphisme}
Let $x$ be a morphic sequence w.r.t. a primitive substitution.
Then, $x$ is substitutive w.r.t. a primitive substitution. 
\end{prop}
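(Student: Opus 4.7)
The plan is to build, from the data $(\sigma,\phi,a)$ with $x=\phi(\sigma^\omega(a))$, a primitive substitution $\tilde\sigma$ on an enlarged alphabet $\tilde A$ prolongable on some letter $\tilde a\in\tilde A$, together with a coding $\rho:\tilde A\to C$, such that $x=\rho(\tilde\sigma^\omega(\tilde a))$. The idea is to refine $A$ by tagging each position within each $\phi$-block and to lift $\sigma$ to this refined alphabet by partitioning each $\sigma(a)$ into $|\phi(a)|$ pieces.

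I would first carry out two preparatory reductions. If $\phi$ is erasing, I replace it by $\phi\circ\sigma^k$ for $k$ large: by primitivity of $\sigma$, for $k$ large every $\sigma^k(a)$ contains some letter not erased by $\phi$, so $\phi\circ\sigma^k$ is non-erasing; and since $y$ is a fixed point of $\sigma$, $(\phi\circ\sigma^k)(y)=x$. Renaming this morphism $\phi$, I next replace $\sigma$ by a sufficiently large power (still primitive, with the same fixed point $y$) so that $|\sigma(a)|\ge T|\phi(a)|$ for every $a\in A$, where $T\ge 2$ is a length beyond which every factor of $y$ contains every letter of $A$ (such $T$ exists by primitivity of $\sigma$, cf.\ Lemma~\ref{lemme:horn}).

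Now set $\tilde A=\{(a,i):a\in A,\,1\le i\le|\phi(a)|\}$, define the non-erasing morphism $\tilde\phi:A^*\to\tilde A^*$ by $\tilde\phi(a)=(a,1)(a,2)\cdots(a,|\phi(a)|)$, and the coding $\rho:\tilde A\to C$ by letting $\rho((a,i))$ be the $i$-th letter of $\phi(a)$, so that $\rho\tilde\phi=\phi$. For each $a\in A$, pick a decomposition $\sigma(a)=w_{a,1}w_{a,2}\cdots w_{a,|\phi(a)|}$ into $|\phi(a)|$ consecutive non-empty factors, each of length at least $T$ (possible because $|\sigma(a)|\ge T|\phi(a)|$), and set
$$
\tilde\sigma((a,i)) = \tilde\phi(w_{a,i}).
$$
Then $\tilde\sigma(\tilde\phi(a))=\tilde\phi(w_{a,1}\cdots w_{a,|\phi(a)|})=\tilde\phi(\sigma(a))$, so $\tilde\sigma\tilde\phi=\tilde\phi\sigma$. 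Since $\sigma(a)=au$ for some $u\in A^+$, the factor $w_{a,1}$ starts with $a$, whence $\tilde\sigma((a,1))$ starts with $(a,1)$; together with $|\tilde\sigma((a,1))|\ge T\ge 2$ this shows $\tilde\sigma$ is prolongable on $(a,1)$. Iterating yields $\tilde\sigma^\omega((a,1))=\tilde\phi(y)$ and hence $\rho(\tilde\sigma^\omega((a,1)))=\phi(y)=x$.

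The main obstacle is to ensure that $\tilde\sigma$ is primitive: naive partitions may reduce some $\tilde\sigma((a,i))$ to a single letter fixed by $\tilde\sigma$, ruining primitivity. The size constraint $|w_{a,i}|\ge T$ is exactly what rules this out. Each $w_{a,i}$ is a factor of $y$ of length at least $T$, so by the choice of $T$ it contains every letter of $A$. Thus $\tilde\sigma((a,i))=\tilde\phi(w_{a,i})$ contains $\tilde\phi(b)$ as a sub-word for every $b\in A$, hence contains every letter of $\tilde A$. The incidence matrix of $\tilde\sigma$ is therefore strictly positive, and $\tilde\sigma$ is primitive.
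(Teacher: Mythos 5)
Your construction is correct and is essentially the paper's own proof: the same reduction $\phi\mapsto\phi\sigma^{k}$ via Lemma \ref{lemme:horn} to make the morphism non-erasing, the same refined alphabet of pairs $(a,i)$ with a coding reading off the $i$-th letter of $\phi(a)$, and the same lifting of a power of $\sigma$ obtained by cutting each $\sigma^{n}(a)$ into $|\phi(a)|$ consecutive blocks. The only variation is the choice of cut: the paper takes single letters followed by one long tail and deduces primitivity from the relation $\tau^{k}\psi=\psi\sigma^{kn}$, whereas you take all blocks of length at least $T$ so that each contains every letter of $A$, making the incidence matrix of $\tilde\sigma$ strictly positive and primitivity immediate.
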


\begin{proof}
Let $y$ be a purely substitutive sequence generated by the primitive substitution $\sigma : A^* \to A^*$ and $\rho : A^* \to B^*$ be a morphism such that $x = \rho (y)$. 
Suppose $y=\sigma^\omega (a)$.

From Lemma \ref{lemme:horn}, there exists $k \leq (m-1)^m$, where $m=\# A$, such that all images of $\sigma^k$
have an occurrence of all letters of $A$.
Thus, the morphism $\phi = \rho \sigma^k$ does not have the empty word as an image of letter.
Moreover, $x=\phi (y)$.

Let $D = \{(c,k) ; c\in C , 0\leq k \leq | \phi (c)|-1 \}$ 
and $\psi : C^* \rightarrow D^* $ the morphism defined by:

\[
\psi (c) = (c,0) \ldots
(c,| \phi (c)|-1 ).
\]

There is an integer $n$ ($\leq \max_{a\in A} |\phi (a)|$) such that $| \sigma^n (c)|  \geq| \phi (c)| $ 
for all $c$ in $C$.

Let $\tau $ be the morphism from $D $ to $ D^* $ defined by:
\begin{center}
\begin{tabular}{ll}
   &$\tau((c,k))=\psi (\sigma^n (c)_{[k,k]})$ \ if $0\leq k<|\phi(c)|-1$,\\  
and &$\tau((c,|\phi(c)|-1))=\psi(\sigma^n(c)_{[|\phi(c)|-1,|\sigma^n(c)|-1]})$
 \ otherwise.\\
\end{tabular}\\
\end{center}

For all $c$ in $C$ we have
$$
\begin{array}{rl}
\tau (\psi (c)) &=\tau ((c,0)\cdots (c,| \phi (c) |-1 ))\\
                &\\
                &=\psi (\sigma^n (c)_{[0,0]} )\cdots \psi (\sigma^n 
                     (c)_{[| \sigma^n (c) |-1,| \sigma^n (c) |-1]})
 = \psi (\sigma^n (c)),
\end{array}
$$
hence $ \tau (\psi (y))=\psi (\sigma^n (y))=\psi(y )$. 
Hence
$\psi (y)$ is a fixed point of the substitution $\tau$ which 
begins with some letter $(e,0)$ and $\tau \psi =\psi \sigma^n$. 

Let $\chi $ be the coding from $D^*$ to $ B^*$ defined 
by $\chi ((c,k)) = \phi(c)_{[k,k]}$ for all $(c,k)$ in $D$. For all 
$c$ in $C$ we obtain
\[
\chi (\psi (c))
= \chi ((c,0) \cdots (c,| \phi (c) | - 1))= \phi (c),
\]
and then $\chi (\psi (y)) = \phi (y)$.
Moreover, the relation $\tau \psi =\psi \sigma^n$ implies that for all $k \in \mathbb{N}$ 
we have $\tau^k \psi =\psi \sigma^{kn}$. Hence if $\sigma$ is primitive 
then $\tau$ is primitive too.
\end{proof}

\begin{coro}
The HD$0$L-$\omega$-equivalence problem is decidable in the primitive case.
\end{coro}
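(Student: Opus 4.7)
The plan is to combine Proposition \ref{prop:imagemorphisme} with Theorem \ref{theo:hdoldecid} and then verify that every ingredient of the resulting decision criterion is effectively computable, mirroring the strategy already used for the D$0$L case in the preceding corollary.

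Given morphisms $\phi : A^* \to C^*$ and $\psi : B^* \to D^*$ together with purely substitutive sequences $x = \sigma^\omega(a)$ and $y = \tau^\omega(b)$ for primitive substitutions $\sigma, \tau$, I would first apply Proposition \ref{prop:imagemorphisme} to reduce $\phi(x)$ and $\psi(y)$ to substitutive sequences of primitive substitutions. The proof of that proposition is fully constructive: using the explicit exponent $k \leq (\#A - 1)^{\#A}$ supplied by Lemma \ref{lemme:horn}, it manufactures a new alphabet $D_1$, a primitive substitution $\sigma' : D_1^* \to D_1^*$, a starting letter $e_1$, and a coding $\chi_1 : D_1^* \to C^*$ with $\phi(x) = \chi_1((\sigma')^\omega(e_1))$. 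The same construction, applied to $\tau$ and $\psi$, produces $(\tau', e_2, \chi_2)$. Hence the question ``$\phi(x) = \psi(y)$'' becomes the corresponding equality between two substitutive sequences w.r.t. primitive substitutions, which is exactly the setting of Theorem \ref{theo:hdoldecid}.

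Next I would apply that theorem to the reduced data $(\sigma', e_1, \chi_1)$ and $(\tau', e_2, \chi_2)$. Its criterion is a finite search over pairs $n < m \leq K$, where $K$ is a closed-form expression in $K_{\sigma'}, K_{\tau'}, |\sigma'|, |\tau'|$ and hence computable. For each $n$, the prefix $u_n$ is a computable finite word; Lemma \ref{lemma:returnsubalgo} makes the morphism $\Theta_{\cdot, u_n}$ and the return substitution $(\sigma')_{u_n}$ algorithmically available; and Proposition \ref{prop:imagereturnalgo} does the same for $\lambda_{u_n}$ (and similarly on the $\tau'$ side). Each candidate $4$-tuple $T_n$ is therefore computable, and the prescribed equalities of prefixes, $\Theta$-morphisms and tuples are trivially decidable.

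The main pitfall to guard against is effectivity bookkeeping: one has to check that the construction in Proposition \ref{prop:imagemorphisme} really does output the new alphabet, substitution and coding in a uniform algorithmic way from $(\sigma, a, \phi)$, so that the constants $K_{\sigma'}, K_{\tau'}, K$ and the morphisms appearing in Theorem \ref{theo:hdoldecid} are not merely finite but actually computable. Granted this, the corollary is immediate by a finite enumeration.
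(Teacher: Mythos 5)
Your proposal is correct and follows exactly the paper's route: reduce via Proposition \ref{prop:imagemorphisme} to substitutive sequences with codings and primitive substitutions, then invoke Theorem \ref{theo:hdoldecid}, noting (as the paper does for the D$0$L corollary) that all constants and morphisms involved are algorithmically computable. Your extra care about the effectivity of the construction in Proposition \ref{prop:imagemorphisme} is a welcome elaboration of what the paper leaves implicit, but it is the same argument.
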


\begin{proof}
It is a consequence of Proposition \ref{prop:imagemorphisme} and Theorem \ref{theo:hdoldecid}.
\end{proof}

\section{More on substitutions sharing the same fixed point}

A substitution on a finite alphabet is {\em invertible} if when extended to a free group, it is an automorphism. 
Let $x$ be a sequence on the finite alphabet $A$ and suppose all letters of $A$ have an occurrence in $x$.
The {\em stabilizer} of $x$, denoted by ${\rm Stab} (x)$, is the set of morphisms $\sigma : A^* \to A^*$ such that $\sigma (x) = x$.
We say that $x$ is {\em rigid} whenever the monoid ${\rm Stab} (x)$ is generated by a single element (see \cite{Krieger:2008} for more informations on this topic).  
It has been shown in \cite{Seebold:1998} that all invertible substitutions generate rigid sequences (Theorem \ref{theo:seebold}).
There was in fact a gap in the proof. 
In \cite{Rao&Wen:2010} the authors gave a geometrical proof (see also \cite{Berthe&Frettloh&Sirvent:preprint} for a different proof). 
The gap in the proof in \cite{Seebold:1998} was recently corrected in \cite{Richomme&Seebold:preprint}.
The statement of this result is the following.

\begin{theo}
\label{theo:seebold}
Let $\sigma$ and $\tau$ be primitive invertible substitutions defined on the alphabet $\{ a,b\}$.
They have powers sharing the same fixed point if and only if there exist a substitution $\zeta$ and positive integers $i$ and $j$ such that $\sigma = \zeta^i$ and $\zeta^j = \tau$.
\end{theo}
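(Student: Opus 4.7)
The ``if'' direction is immediate: with $\sigma=\zeta^{i}$ and $\tau=\zeta^{j}$, the iterates $\sigma^{j}=\zeta^{ij}=\tau^{i}$ coincide, so every fixed point of $\zeta^{ij}$ is shared by $\sigma^{j}$ and $\tau^{i}$.

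For the converse, assume $\sigma^{p}(x)=x=\tau^{q}(x)$ for some positive integers $p,q$ and some sequence $x\in\{a,b\}^{\mathbb{N}}$. I would proceed in three steps.

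\emph{Step 1 (reduction to the stabilizer of $x$).} Since $\sigma$ is primitive on a two-letter alphabet, $\sigma^{p}$ has at most two fixed points, so the $\sigma$-orbit $\{x,\sigma(x),\sigma^{2}(x),\ldots\}$, which is contained in $\mathrm{Fix}(\sigma^{p})$, has length at most two. Hence $\sigma^{2}\in\mathrm{Stab}(x)$, and similarly $\tau^{2}\in\mathrm{Stab}(x)$; in the ``non-swap'' case one already has $\sigma,\tau\in\mathrm{Stab}(x)$ directly.

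\emph{Step 2 (cyclicity of $\mathrm{Stab}(x)$).} Next I would show that $\mathrm{Stab}(x)$ is generated by a single primitive invertible substitution $\zeta$. The natural approach, in the spirit of this paper, is via return words: any $\eta\in\mathrm{Stab}(x)$ sends a prefix $u$ of $x$ to the prefix $\eta(u)$ of $x$, and by Proposition~\ref{prop:derivedsub} its action is rigidly encoded by the return substitutions $\sigma_{u}$. Combined with invertibility of $\eta$ as an automorphism of $F_{2}$, whose abelianization lies in a cyclic (parabolic or hyperbolic) subgroup of $GL_{2}(\mathbb{Z})$ determined by $x$, this forces $\mathrm{Stab}(x)$ to be cyclic. (Equivalently, one can invoke the classification of primitive invertible substitutions on $\{a,b\}$ as codings of circle rotations, whose stabilizers are known to be cyclic; this is the content of the corrected argument of S\'e\'ebold and of the geometric proof of Rao--Wen.)

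\emph{Step 3 (lifting from powers).} Writing $\zeta$ for a generator of $\mathrm{Stab}(x)$, Step~1 gives $\sigma^{k}=\zeta^{a}$ and $\tau^{\ell}=\zeta^{b}$ with $k,\ell\in\{1,2\}$. One then lifts to $\sigma=\zeta^{i}$ and $\tau=\zeta^{j}$ using unique extraction of roots on the cyclic subgroup $\langle\zeta\rangle$ of $\mathrm{Aut}(F_{2})$, which is torsion-free; in the ``swap'' case of Step~1 one must first replace $\zeta$ by an appropriate root which itself permutes the two elements of the $\sigma$-orbit of $x$, using invertibility on $\{a,b\}$ to guarantee such a root exists.

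The main obstacle is Step~2: proving cyclicity of $\mathrm{Stab}(x)$ directly from return-word considerations. The return-word compatibility shows that every element of $\mathrm{Stab}(x)$ is determined by its action on a finite, explicitly bounded set of return words (compare Proposition~\ref{prop:cardsub}), but upgrading this finiteness to cyclic generation requires invertibility in an essential way, to rule out non-commensurable stabilizer elements whose abelianizations would generate a rank-two subgroup of $GL_{2}(\mathbb{Z})$. Steps~1 and~3 are comparatively routine once Step~2 is in hand.
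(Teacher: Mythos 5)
The paper does not prove Theorem~\ref{theo:seebold} at all: it quotes it as a known result of S\'e\'ebold, notes that the original proof had a gap only recently corrected in the Richomme--S\'e\'ebold preprint, and points to the geometric proofs of Rao--Wen and Berth\'e--Frettl\"oh--Sirvent. Your sketch has essentially the same status, because its Step~2 --- cyclicity of ${\rm Stab}(x)$, i.e.\ rigidity of fixed points of primitive invertible substitutions on two letters --- \emph{is} the theorem (the paper explicitly identifies Theorem~\ref{theo:seebold} with the statement that invertible substitutions generate rigid sequences). Invoking ``the corrected argument of S\'e\'ebold and the geometric proof of Rao--Wen'' at that point is circular: you are citing the result you are asked to prove. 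The alternative mechanism you offer, that the abelianizations of stabilizer elements lie in a cyclic subgroup of $GL_2(\mathbb{Z})$, is both unproved and insufficient: even granting it, cyclicity of a subgroup of $GL_2(\mathbb{Z})$ does not yield cyclic generation of the monoid ${\rm Stab}(x)$ of substitutions (distinct substitutions can have commensurable or equal abelianizations), and the return-word finiteness you appeal to (Proposition~\ref{prop:cardsub}) cannot close this: S\'e\'ebold's counterexample quoted right after the theorem ($\sigma : a\mapsto ab,\ b\mapsto baabba$, $\tau : a\mapsto abbaab,\ b\mapsto ba$) shows that primitive substitutions sharing a fixed point need not be powers of a common substitution, so any correct argument must exploit invertibility through the actual structure of the Sturmian morphism monoid (or an equivalent geometric description), and that mechanism is exactly what your sketch leaves out.

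Secondary issues: in Step~3, the ``swap'' case is not handled --- from $\sigma^2\in{\rm Stab}(x)=\langle\zeta\rangle$ you cannot extract a root inside $\langle\zeta\rangle$ without an argument, since $\sigma$ itself need not stabilize $x$, and the existence of a root of $\zeta$ permuting the two fixed points is asserted, not proved. Step~1 is fine. But as it stands the proposal reduces the statement to the literature it is supposed to replace, so the essential content is missing.
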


As pointed out in \cite{Krieger:2008}, the invertibility assumption is crucial in this theorem. 
The following example due to S\'e\'ebold (see \cite{Krieger:2008}) gives a counterexample :

$$
\sigma : a\mapsto ab, \ b\mapsto baabba , \ \tau : a\mapsto abbaab , \ b\mapsto ba .
$$

Indeed, $\sigma^\omega (a) = \tau^\omega (a)$ but $\sigma$ and $\tau$ are not powers of the same substitution.

For a 3-letter alphabet, it has been shown in \cite{Wen&Zhang:1999} (see also \cite{Richomme:2003})
that the monoid of invertible substitutions is not finitely generated.
This leaded D. Krieger \cite{Krieger:2008} to find a non rigid purely substitutive sequence $x$ w.r.t. an invertible substitution.

Below we propose a weaker version of Theorem \ref{theo:seebold} (Corollary \ref{coro:seebold}) that applies to any kind of alphabets.
Before we need a lemma, a proposition and the following notation : for any word $w$, $V(w)$ is the vector whose entry of index $a$ is the number of $a$ in $w$. 
It is sometimes called the {\em Parikh vector of $w$} or the {\em abelianized} of $w$.

\begin{lemma}
\label{lemma:1998}
Let $x=\sigma^\omega (a)$ and $y=\tau^\omega (a)$ where $\sigma$ and $\tau$ are two primitive substitutions (on an alphabet with $d$ letters). 
Suppose $x$ and $y$ have a common prefix $u$ satisfying :

\begin{enumerate}
\item
$\sigma (w) =\tau (w)$ for all $w\in \mathcal{R}_{x,u}$;
\item
$\{ V(w) ; w\in \mathcal{R}_{x,u} \}$ generates $\mathbb{Z}^d$. 
\end{enumerate} 

Then $\tau = \sigma$.
\end{lemma}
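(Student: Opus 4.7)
The plan is to reduce the combinatorial equality $\sigma(w)=\tau(w)$ on return words to letter-wise equality, exploiting the second hypothesis first to match lengths and then to align positions.

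First I would establish that the length vectors of $\sigma$ and $\tau$ coincide. Writing $L_\sigma=(|\sigma(a)|)_{a\in A}$ and $L_\tau=(|\tau(a)|)_{a\in A}$, the identity $|\sigma(w)|=L_\sigma\cdot V(w)$ holds for every word $w$, and similarly for $\tau$. Since $\sigma(w)=\tau(w)$ for each $w\in\mathcal{R}_{x,u}$, we have $(L_\sigma-L_\tau)\cdot V(w)=0$ for all such $w$. The assumption that $\{V(w):w\in\mathcal{R}_{x,u}\}$ generates $\mathbb{Z}^d$ then forces $L_\sigma=L_\tau$, so $|\sigma(a)|=|\tau(a)|$ for every letter $a\in A$.

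Next I would deduce letter-wise equality. Fix any return word $w=b_1b_2\cdots b_n\in\mathcal{R}_{x,u}$ with $b_i\in A$. The hypothesis gives
$$\sigma(b_1)\sigma(b_2)\cdots\sigma(b_n)\;=\;\tau(b_1)\tau(b_2)\cdots\tau(b_n),$$
and since $|\sigma(b_i)|=|\tau(b_i)|$ for each $i$, the two factorizations of this common word are aligned block by block, yielding $\sigma(b_i)=\tau(b_i)$ for every $i$. Hence $\sigma$ and $\tau$ agree on every letter that occurs in some return word to $u$.

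Finally I would observe that every letter of $A$ does occur in some $w\in\mathcal{R}_{x,u}$: otherwise there would be an $a\in A$ whose coordinate in $V(w)$ vanishes for all $w\in\mathcal{R}_{x,u}$, contradicting the generation of $\mathbb{Z}^d$. Combining with the previous step gives $\sigma(a)=\tau(a)$ for every $a\in A$, i.e.\ $\sigma=\tau$. There is no real obstacle in this argument; the only subtlety is to notice that the Parikh-vector hypothesis plays two distinct roles — first abelianizing the equality of words into an equality of length vectors, and then guaranteeing that every letter is actually present in the return-word decomposition so that the block-by-block alignment covers all of $A$.
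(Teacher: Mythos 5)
Your proof is correct: the Parikh hypothesis forces $L_\sigma=L_\tau$ by abelianization, the equal block lengths then align $\sigma(b_i)=\tau(b_i)$ inside each return word, and the same hypothesis guarantees every letter of $A$ occurs in some return word. The paper leaves this proof to the reader, and your argument is the natural one intended, so there is nothing further to compare.
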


\begin{proof}
The proof is left to the reader.
\end{proof}
\begin{prop}
\label{prop:1998}
\cite{Durand:1998b}
Let $\tau$ and $\sigma$ be two primitive substitutions.
Then, they share the same non-periodic fixed point $\sigma^\omega (a) = \tau^\omega (a)$ if and only if  $\sigma^\omega (a)$ and  $\tau^\omega (a)$ share a common prefix $u$ such that there exist two integers $i$ and $j$ 
verifying 

\begin{align}
\label{egalite:1998}
\tau_u^i = \sigma_u^j.
\end{align}
\end{prop}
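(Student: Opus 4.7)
The two implications are handled separately, starting with the easier direction $(\Leftarrow)$. Suppose $u$ is a common prefix of $x = \sigma^\omega(a)$ and $y = \tau^\omega(a)$ and that $\sigma_u^j = \tau_u^i$ for some positive integers $i, j$. Since the two sides are equal as morphisms they share the same source alphabet $R_{x,u} = R_{y,u}$, and both are primitive (powers of the primitive return substitutions provided by Proposition~\ref{prop:derivedsub}), hence this common morphism admits a single fixed point. Viewed from the $\sigma$-side that fixed point is $\mathcal{D}_u(x)$, and from the $\tau$-side it is $\mathcal{D}_u(y)$, forcing $\mathcal{D}_u(x) = \mathcal{D}_u(y)$. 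Because $u$ is a prefix of both $x$ and $y$ and the derived sequences coincide, the return words and their enumeration by first occurrence agree in $x$ and $y$, so $\Theta_{x,u} = \Theta_{y,u}$; decoding gives $x = \Theta_{x,u}(\mathcal{D}_u(x)) = \Theta_{y,u}(\mathcal{D}_u(y)) = y$.

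For the necessity $(\Rightarrow)$, assume $x = y$ and fix a nested family of prefixes $u_n$ of $x$, say $u_n = \sigma^n(a)$. Proposition~\ref{prop:cardsub} applied to $\sigma$ and to $\tau$ separately makes both $\{\sigma_{u_n}\}_n$ and $\{\tau_{u_n}\}_n$ finite, so pigeonhole yields indices $n < m$ with $\sigma_{u_n} = \sigma_{u_m}$ and $\tau_{u_n} = \tau_{u_m}$; by Proposition~\ref{prop:derivedsub} this also forces $\mathcal{D}_{u_n}(x) = \mathcal{D}_{u_m}(x)$. The plan is then to promote this repetition into a genuine power relation. A key identity is $\sigma_u^k = (\sigma^k)_u$, which follows from iterating $\Theta_{x,u}\sigma_u = \sigma\Theta_{x,u}$, so iterating return substitutions is the same as taking powers of the underlying substitution. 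Since $\sigma(x) = \tau(x) = x$, the words $\sigma^k(u)$ and $\tau^\ell(u)$ are all prefixes of $x$, and matching their lengths for suitable exponents forces $\sigma^k(u) = \tau^\ell(u)$ as actual words.

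To conclude $\sigma_u^j = \tau_u^i$ I would invoke Lemma~\ref{lemma:1998} at a sufficiently deep prefix $u$, at which the Parikh vectors of $\mathcal{R}_{x,u}$ already span $\mathbb{Z}^{|R|}$ (automatic for primitive $\sigma$ on long prefixes, giving hypothesis (2) of the lemma). The main obstacle is the first hypothesis: verifying $\sigma_u^j(w) = \tau_u^i(w)$ for every $w \in \mathcal{R}_{x,u}$. The Perron--Frobenius structure (the common frequency vector of $x$ forces the Perron eigenvalues of $M_\sigma$ and $M_\tau$ to be multiplicatively dependent) provides matching lengths for the images, but letter-level equality has to be squeezed out by combining the length equalities with the pigeonhole data of the first step, possibly by a second round of derivation-pigeonhole that collapses the remaining freedom and shows that both morphisms impose the same return-word decomposition on the common prefix of $x$ they reconstruct.
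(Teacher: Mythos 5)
The paper itself offers no proof of this proposition (it is quoted from \cite{Durand:1998b}), so your attempt has to stand on its own, and in both directions it has genuine gaps. In the direction $(\Leftarrow)$ your argument is fine up to $\mathcal{D}_u(x)=\mathcal{D}_u(y)$ (both are fixed points beginning with the letter $1$ of the primitive morphism $\zeta:=\sigma_u^j=\tau_u^i$), but the decisive step --- ``the return words and their enumeration by first occurrence agree in $x$ and $y$, so $\Theta_{x,u}=\Theta_{y,u}$'' --- is asserted, not proved, and it does not follow from what precedes. The derived sequences live over the abstract alphabet $\{1,\dots,\#\mathcal{R}_{x,u}\}$; their equality identifies nothing about the concrete return words in $A^*$, which depend on $x$ and $y$ far beyond the common prefix $u$. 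Two sequences can perfectly well share a prefix $u$ and have the same derived sequence on $u$ while having different return words (hence different $\Theta$'s); what must rule this out here are the intertwining relations $\sigma^j\Theta_{x,u}=\Theta_{x,u}\zeta$ and $\tau^i\Theta_{y,u}=\Theta_{y,u}\zeta$ together with the common prefix, and your argument never uses them at this point. That missing step is essentially the whole content of the implication.

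The direction $(\Rightarrow)$ is not a proof: you acknowledge that the verification of hypothesis (1) of Lemma~\ref{lemma:1998} is left open and hope a ``second round of derivation-pigeonhole'' will close it. Moreover the route is misdirected. Lemma~\ref{lemma:1998} concludes $\tau=\sigma$, so applied to powers it would give $\tau^i=\sigma^j$, which is strictly stronger than the desired $\tau_u^i=\sigma_u^j$ and is false in general: S\'e\'ebold's example recalled in the paper exhibits primitive $\sigma,\tau$ with the same fixed point and no equal powers. Consistently with this, the paper points out that in that example the Parikh vectors of $\mathcal{R}_{x,u}$ are $\mathbb{Q}$-collinear for every relevant $u$, so your claim that hypothesis (2) (spanning of $\mathbb{Z}^d$) is ``automatic for primitive $\sigma$ on long prefixes'' is wrong --- this is precisely why spanning is an extra assumption in Corollary~\ref{coro:seebold}, whose role is to upgrade the present proposition, not to prove it. Two further steps are shaky: a common letter-frequency vector does not elementarily force multiplicative dependence of the Perron eigenvalues (that is essentially the Cobham--Durand theorem), and exact length matching $|\sigma^k(u)|=|\tau^\ell(u)|$ need never occur. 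Your opening move (pigeonhole on the finitely many return substitutions of $\sigma$ and $\tau$ along nested prefixes, via Propositions~\ref{prop:cardsub} and~\ref{prop:derivedsub}) is the right kind of tool and is in the spirit of the cited source, but it has to be pushed until it yields the power identity between the return substitutions at a single prefix; as written, that conversion --- the actual heart of the proposition --- is left blank.
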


Remark that Equality \eqref{egalite:1998} means that $\tau^i$ and $\sigma^j$ coincide on the set $\mathcal{R}_{x,u}$.
Thus, using Lemma \ref{lemma:1998} we obtain the announced corollary.

\begin{coro}
\label{coro:seebold}
Let $\tau$ and $\sigma$ be two primitive substitutions (on an alphabet with $d$ letters) sharing the same non-periodic fixed point $x= \sigma^\omega (a) = \tau^\omega (a)$.
Let $i$, $j$ be the positive integers and $u$ the prefix of $x$ given in Proposition \ref{prop:1998}.
Suppose the set $\{ V(w) ; w\in \mathcal{R}_{x,u} \}$ generates $\mathbb{Z}^d$. 
Then, there exist two integers $i$ and $j$ such that
$$
\tau^i = \sigma^j.
$$
\end{coro}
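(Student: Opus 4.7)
The plan is to reduce the corollary to Lemma~\ref{lemma:1998} applied to the iterates $\sigma^j$ and $\tau^i$ rather than to $\sigma$ and $\tau$ themselves. First, I would invoke Proposition~\ref{prop:1998} to obtain (as stated in the corollary) a common prefix $u$ of $x$ and positive integers $i,j$ with $\tau_u^i = \sigma_u^j$. By the remark immediately preceding the corollary, this equality of return substitutions unpacks into the pointwise equality $\sigma^j(w) = \tau^i(w)$ for every $w \in \mathcal{R}_{x,u}$, which is exactly condition (1) of Lemma~\ref{lemma:1998} for the pair $(\sigma^j, \tau^i)$.

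Next I would verify that the remaining hypotheses of Lemma~\ref{lemma:1998} are preserved when we pass from $(\sigma,\tau)$ to $(\sigma^j,\tau^i)$. Since $\sigma(x) = x$ and $\tau(x) = x$, one has $\sigma^j(x) = x = \tau^i(x)$; since $\sigma$ and $\tau$ are prolongable on $a$, so are $\sigma^j$ and $\tau^i$, giving $x = (\sigma^j)^\omega(a) = (\tau^i)^\omega(a)$. Powers of primitive substitutions are again primitive, so both $\sigma^j$ and $\tau^i$ are primitive. The return-word set $\mathcal{R}_{x,u}$ is an intrinsic invariant of the pair $(x,u)$ and makes no reference to which substitution produced $x$, hence the condition that $\{V(w); w \in \mathcal{R}_{x,u}\}$ generates $\mathbb{Z}^d$, which is our standing assumption, is precisely condition (2) of Lemma~\ref{lemma:1998} in this new setting.

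Having checked all the input data, I would apply Lemma~\ref{lemma:1998} to $(\sigma^j, \tau^i)$ to conclude $\sigma^j = \tau^i$, which is the statement of the corollary.

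I do not foresee any genuine obstacle: the heavy lifting is already packaged inside Proposition~\ref{prop:1998} and Lemma~\ref{lemma:1998}, and the remark bridging return-substitution equality and ambient-substitution equality on return words is exactly the pivot we need. The only step that requires a moment of vigilance is the bookkeeping check that the pair $(\sigma^j,\tau^i)$ inherits every piece of data Lemma~\ref{lemma:1998} consumes — fixed-point structure, prolongability on $a$, primitivity, the common prefix $u$, and the Parikh generation condition — which it does for the reasons outlined above.
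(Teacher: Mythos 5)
Your proposal is correct and follows essentially the same route as the paper: the paper's own (very terse) argument is precisely to read Equality \eqref{egalite:1998} as saying that $\tau^i$ and $\sigma^j$ coincide on $\mathcal{R}_{x,u}$ and then to apply Lemma \ref{lemma:1998} to the pair $(\sigma^j,\tau^i)$, exactly as you do. Your extra bookkeeping (primitivity of powers, shared fixed point, the Parikh condition being intrinsic to $(x,u)$) is just a fuller write-up of the same argument.
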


Applied to invertible substitutions on a 2-letter alphabet this corollary implies that when two invertible primitive substitutions share the same fixed point then they have non trivial common powers because it can be checked that for all $u$ the set $\{ V(w) ; w\in \mathcal{R}_{x,u} \}$ generates $\mathbb{Z}^2$. 
For the counterexample of Seebold recalled above this means that all $u$ satisfying Equality \eqref{egalite:1998} is such that all vectors in $\{ V(w) ; w\in \mathcal{R}_{x,u} \}$ are $\mathbb{Q}$-colinear.

Proposition \ref{prop:1998} provides a way to study rigity problems : with a weaker assumption than the one in Corollary \ref{coro:seebold} but with specific families of substitutions it is clear that the conclusion of this corollary would also hold. 

To finish with this section we would like to mention the following complete substitutive extension of Cobham's theorem  obtained in \cite{Durand:2011}. 
This gives a strong restriction (without the primitivity assumption) to generate the same morphic sequence.

\begin{theo}
\label{theo:2011}
Let $\alpha$ and $\beta$ be two multiplicatively independent Perron numbers. Let $A$ be a finite alphabet and $x$ be a sequence of $A^{\mathbb{N}}$. Then, 
$x$ is both $\alpha$-substitutive and $\beta$-substitutive
if and only if
$x$ is ultimately periodic,
\end{theo}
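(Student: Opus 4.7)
\textbf{Proof plan for Theorem \ref{theo:2011}.} The ``if'' direction is routine: an ultimately periodic sequence $x=uv^\omega$ is $\alpha$-substitutive for every Perron number $\alpha$, since one can realise $x$ as the coding of the fixed point of a substitution whose incidence matrix has spectral radius $\alpha$ (for instance by padding the obvious period-$|v|$ substitution so that $\alpha$ becomes the dominant eigenvalue). So the real content is the converse.

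For the converse, suppose $x$ is both $\alpha$- and $\beta$-substitutive but not ultimately periodic. The first stage of my plan is reduction to the primitive purely substitutive case while keeping track of the Perron numbers. Write $x=\phi(y)=\psi(z)$ with $y=\sigma^\omega(a)$ and $z=\tau^\omega(b)$. Using Proposition \ref{prop:imagemorphisme} (and its non-primitive analogue: peel off erasing letters and restrict to the essential component reached from the seed), one arranges that $\sigma$ and $\tau$ are non-erasing and primitive, and that the dominant eigenvalues of $M_\sigma$ and $M_\tau$ remain $\alpha$ and $\beta$ respectively. Since $x$ is not ultimately periodic, neither is $y$ nor $z$.

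The second stage is to extract the growth information provided by Perron--Frobenius through the return-word formalism developed in Section \ref{subsec:returnwords}. Choose a sequence of prefixes $u_n$ of $x$ along which the pair of derived sequences $(\mathcal{D}_{u_n}(y),\mathcal{D}_{u_n}(z))$ repeats infinitely often; this is legitimate because by Proposition \ref{prop:cardsub} the sets of return substitutions of $\sigma$ and $\tau$ are both finite, so a pigeonhole argument yields infinitely many $n$ with a common derivation pattern. By Perron--Frobenius and the bounds of Theorem \ref{theo:encad}, the lengths $|u_n|$ computed on the $\sigma$-side grow like $\alpha^{k_n}$ and on the $\tau$-side like $\beta^{\ell_n}$, up to bounded multiplicative factors depending only on $\sigma$ and $\tau$.

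The third stage confronts these two scales. Along the matching subsequence one obtains a relation of the form $\alpha^{k_n}\asymp\beta^{\ell_n}$ for infinitely many pairs $(k_n,\ell_n)\to\infty$, which, together with a uniform control on the constants, forces $\log\alpha/\log\beta$ to be rational, contradicting the multiplicative independence of $\alpha$ and $\beta$, unless the return-word structure degenerates to a single orbit. Degeneration of the derived sequences to a single fixed return substitution with one return word forces $\mathcal{D}_{u_n}(y)$ to be eventually constant, hence $y$ (and therefore $x$) ultimately periodic, contradicting our assumption.

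The principal obstacle is the bookkeeping in the reduction step: the definition of ``$\alpha$-substitutive'' pins $\alpha$ to be the dominant eigenvalue of the substitution actually generating $y$, but the natural manipulations (removing erasing letters, restricting to essential letters, composing with codings, passing to return substitutions) can alter the incidence matrix and a priori change the spectrum. Showing that one can always carry out these reductions \emph{without} losing the Perron number, so that the final primitive substitution still has dominant eigenvalue $\alpha$ (respectively $\beta$), is the delicate technical heart of the argument and is where the paper \cite{Durand:2011} concentrates its work.
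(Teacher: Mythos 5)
First, note that the paper you are working from does not actually prove Theorem \ref{theo:2011}: it is quoted as a result of \cite{Durand:2011}, so there is no in-paper proof to match your sketch against; your proposal has to stand on its own, and as it stands its central step fails. The fatal point is your third stage. From matching scales you only obtain $\alpha^{k_n}\asymp\beta^{\ell_n}$ with multiplicative constants bounded uniformly in $n$, and such approximate commensurabilities exist for \emph{any} two real numbers greater than $1$: given $k$, take $\ell=\lfloor k\log\alpha/\log\beta\rfloor$ and then $1\leq\alpha^{k}\beta^{-\ell}\leq\beta$. Hence bounded ratios for infinitely many pairs $(k_n,\ell_n)$ carry no contradiction with multiplicative independence and certainly do not force $\log\alpha/\log\beta\in\mathbb{Q}$. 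To get a contradiction you would need exact information, e.g.\ that the ratios $\alpha^{k_n}\beta^{-\ell_n}$ lie in a \emph{finite} set (two equal ratios then give $\alpha^{p}=\beta^{q}$), or you must use multiplicative independence the other way around, as the known proofs do: by Kronecker/Weyl, $\{\alpha^{k}\beta^{-\ell}\}$ is \emph{dense} in $(0,\infty)$, and this density is exploited, together with the finiteness of the set of derived sequences (Proposition \ref{prop:cardsub} type results) and a separate lemma saying that a non-periodic uniformly recurrent sequence cannot have equal derived sequences on prefixes of nearly equal length, to force ultimate periodicity. Your sketch never produces either the exact relations or such a structural lemma, so the heart of the theorem is missing, not merely deferred.

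Two secondary gaps deserve mention. For the ``if'' direction, ``padding the obvious period-$|v|$ substitution so that $\alpha$ becomes the dominant eigenvalue'' is not a construction when $\alpha$ is an arbitrary Perron number (say irrational): one needs Lind's theorem realizing $\alpha$ as the Perron eigenvalue of a primitive integer matrix and then a genuine product-type construction combining that substitution with the periodic structure while keeping $x$ as the coded image. For the converse, your reduction step is also more than bookkeeping: Proposition \ref{prop:imagemorphisme} is stated only in the primitive setting, replacing $\sigma$ by $\sigma^{k}$ changes $\alpha$ to $\alpha^{k}$ (harmless), but removing erasing letters and restricting to the sub-alphabet actually reached can change the dominant eigenvalue, and in the non-primitive case the whole return-word machinery (uniform recurrence, Theorem \ref{theo:encad}, Proposition \ref{prop:cardsub}) is unavailable; handling this without losing the Perron datum is a substantial part of \cite{Durand:2011}, not an afterthought. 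As written, the proposal is an outline whose decisive arithmetic step is incorrect and whose remaining steps lean on the very paper being cited.
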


where "$x$ is $\alpha$-substitutive" means that $x$ is substitutive w.r.t. a substitution whose dominant eigenvalue of its incidence matrix is $\alpha$.
This result also holds true for morphic sequences whenever the morphisms (instead of the codings) are non-erasing.
For purely substitutive substitutions it (the necessary condition) was known from \cite{Durand:2002}.
  
\section{Periodicity of HD0L infinite sequences}

In the sequel $\sigma : A^*\to A^*$ is a primitive morphism, $\phi : A^* \to B^*$ is a morphism, $y=\sigma^\omega  (a)$ is a sequence of  $A^\mathbb{N}$ and $x=\phi (y)$.
We have to find an algorithm telling if $x$ is ultimately periodic or not.
Remark that as the morphism is primitive, the sequence $x$ is uniformly recurrent, and, ultimate periodicity of $x$ is equivalent to its periodicity. 
The following lemma is easy to prove.

\begin{lemma}
\label{lemma:period-return}
For any periodic sequence $z$ with length period $L$, if $u$ is a word of $\mathcal{L}(z)$ whose length is greater than $L$ then $\# 
\mathcal{R}_u =1$. 
\end{lemma}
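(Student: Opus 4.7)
The plan is to show that the set $S \subseteq \mathbb{N}$ of occurrences of $u$ in $z$ is a single arithmetic progression $\{i_0 + kp : k \geq 0\}$, from which $\#\mathcal{R}_u = 1$ follows immediately: every return word will be of the form $z_{[i_0 + kp, i_0 + (k+1)p - 1]}$, which by $p$-periodicity of $z$ does not depend on $k$.

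The main step, and the only mildly delicate one, is the following: for any two occurrences $i < j$ in $S$, the difference $j - i$ is itself a length period of $z$. To see this, I would write $z = v^{\omega}$ with $|v| = L$ and equate $z_{[i, i + |u| - 1]}$ with $z_{[j, j + |u| - 1]} = u$ letter by letter, which gives $z_m = z_{m + (j - i)}$ on the interval $m \in [i, i + |u| - 1]$. This interval has length $|u| > L$, so it covers every residue class modulo $L$; combining this with the $L$-periodicity of $z$ allows one to propagate the equality to $z_n = z_{n + (j - i)}$ for every $n \in \mathbb{N}$. This propagation from a finite interval to all of $\mathbb{N}$ is the whole purpose of the hypothesis $|u| > L$, and is the one point where the argument needs a little care.

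Once this is in hand, take $p$ to be the smallest positive gap between consecutive elements of $S$. By the previous step, $p$ is a length period of $z$, so $i + p \in S$ whenever $i \in S$; the next occurrence after any $i \in S$ is therefore at most $p$ away, and exactly $p$ away by the definition of $p$. Hence $S$ is a single arithmetic progression of common difference $p$, and by the observation in the first paragraph $\#\mathcal{R}_u = 1$.
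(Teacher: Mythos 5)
Your argument is correct: the key step (any difference of two occurrences of a word of length greater than $L$ is itself a length period of $z$, proved by propagating the letterwise equality over an interval that meets every residue class modulo $L$) is sound, and from it the occurrence set is indeed an arithmetic progression, so all return words coincide. The paper offers no proof to compare against — it simply declares the lemma "easy to prove" — and your proof is exactly the kind of short direct argument it has in mind, so nothing is missing beyond writing out the propagation step you sketch.
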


We recall (Proposition \ref{prop:derder}) that for all $n$ there exists a prefix $u_n$ of $y$ such that $\mathcal{D}^n (x) = \lambda_{n}(\mathcal{D}_{u_n} (y))$ and $u_1 = x_0$ where $\lambda_n$ is the unique morphism from $R_{u_n}^*$ to $R_{\phi(u_n)}^*$ defined by 

$$
\phi \Theta_{x,u_n} =  \Theta_{\phi (x), \phi (u_n)} \lambda_n.  
$$

We set $\sigma_n = \sigma_{u_n}$.

\begin{prop}
\label{prop:decid-per}
Suppose $\phi$ is a coding and the incidence matrix of $\sigma$ has strictly positive entries.
Let $K = \left(4K_\sigma^3\right)^{|\sigma | K_\sigma^2 +1}(K_\sigma +1)^{K_\sigma^2}$.
Then, $x$ is periodic if and only if there exists $i$ less than $K$ such that $\# \mathcal{R}_{\phi (u_i)} = 1$.
\end{prop}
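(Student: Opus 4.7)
The plan is to prove both implications separately, the reverse being immediate and the forward one resting on a strict-decrease-of-period argument combined with pigeonhole on the finite family of pairs $(\sigma_i,\lambda_i)$. For $(\Leftarrow)$, if $\#\mathcal{R}_{\phi(u_i)}=1$ and $w$ is the unique return word, then $x$ is a concatenation of copies of $w$, hence $x=w^{\omega}$ is periodic.

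For $(\Rightarrow)$, assume $x$ is periodic and let $L_i$ denote the \emph{minimal} length period of $\mathcal{D}^{i}(x)$. The first step is to prove that $L_{i+1}<L_i$ whenever $L_i\geq 2$. Writing $\mathcal{D}^i(x)=w^{\omega}$ with $|w|=L_i\geq 2$ and first letter $\alpha$, the number $k$ of occurrences of $\alpha$ in $w$ satisfies $k<L_i$ (otherwise $w=\alpha^{L_i}$, so $\mathcal{D}^i(x)=\alpha^{\omega}$ would have minimal period $1$, contradicting $L_i\geq 2$); since one period of $\mathcal{D}^i(x)$ decomposes into exactly $k$ consecutive return words to $\alpha$, the sequence $\mathcal{D}^{i+1}(x)$ admits the length period $k$, hence $L_{i+1}\leq k<L_i$. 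It follows that there is a smallest index $j^{*}$ with $L_{j^{*}}=1$, equivalently $\#\mathcal{R}_{\phi(u_{j^{*}})}=1$.

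The second step shows $j^{*}<K$. Suppose for contradiction $j^{*}\geq K$, so that $L_i\geq 2$ for every $i<K$ and the pairs $(\sigma_i,\lambda_i)$ for $i\in\{0,1,\ldots,K\}$ are all well defined. By Propositions \ref{prop:cardsub} and \ref{prop:deflambda}(2) the number of distinct such pairs is at most $K$, so the pigeonhole principle furnishes $0\leq m<n\leq K$ with $(\sigma_m,\lambda_m)=(\sigma_n,\lambda_n)$. From $\sigma_m=\sigma_n$, uniqueness of the fixed point of a primitive substitution gives $\mathcal{D}_{u_m}(y)=\mathcal{D}_{u_n}(y)$, and combining $\lambda_m=\lambda_n$ with Proposition \ref{prop:deflambda}(1) yields
\[
\mathcal{D}^m(x)=\lambda_m(\mathcal{D}_{u_m}(y))=\lambda_n(\mathcal{D}_{u_n}(y))=\mathcal{D}^n(x).
\]
Since $n\leq K\leq j^{*}$, the strict decrease of the first step applies on the range $[m,n]$, giving $L_m>L_n$ and contradicting the displayed equality.

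I expect the main obstacle to be securing the strict decrease of $L_i$: the argument must appeal to the \emph{minimal} period $L_i$ in order to reach the strict inequality $k<L_i$, since a merely admissible period would only yield $k\leq L_i$ and defeat the pigeonhole contradiction. Once this decrease is in hand, the second step is a routine application of the explicit bounds of Propositions \ref{prop:cardsub} and \ref{prop:deflambda}, and the strict inequality $j^{*}<K$ falls out because any collision produced by pigeonhole necessarily satisfies $n\leq K$.
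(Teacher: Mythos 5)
Your proof is correct in substance, but the engine of the forward direction is different from the paper's. The paper, after the same pigeonhole on the pairs $(\sigma_i,\lambda_i)$ (whose number is bounded by Propositions \ref{prop:cardsub} and \ref{prop:deflambda}), deduces $\mathcal{D}^i(x)=\mathcal{D}^{i+k(j-i)}(x)$ for all $k$, then invokes the growth of the prefixes, $|u_{i+k(j-i)}|\to+\infty$ (Proposition \ref{prop:derder}), together with Lemma \ref{lemma:period-return} (a factor of a periodic sequence longer than the period has a single return word) to force $\#\mathcal{R}_{\phi(u_i)}=1$ at the small index. You replace that pair of ingredients by a new elementary lemma: deriving a periodic sequence at its first letter strictly decreases the minimal period as long as it is at least $2$, so the iterated derived sequences reach a constant sequence at a first index $j^{*}$, and a repetition $\mathcal{D}^m(x)=\mathcal{D}^n(x)$ before $j^{*}$ would contradict this strict monotonicity. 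Your monotonicity lemma is proved correctly (the count $k$ of occurrences of the first letter in a minimal period satisfies $1\le k<L_i$, and the return-word decomposition is $k$-periodic), and your insistence on the \emph{minimal} period is exactly what makes it work; what your route buys is independence from Lemma \ref{lemma:period-return} and from the unboundedness of $|u_n|$, at the cost of this extra lemma. One bookkeeping caveat: your pigeonhole runs over $i\in\{0,1,\ldots,K\}$, but $(\sigma_0,\lambda_0)$ corresponds to the empty prefix and is not defined in the paper's framework (the recalled relation $\mathcal{D}^n(x)=\lambda_n(\mathcal{D}_{u_n}(y))$ starts at $n=1$ with $u_1$ a single letter); shifting to $i\in\{1,\ldots,K+1\}$ your argument yields $j^{*}\le K$ rather than $j^{*}<K$. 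This off-by-one is of the same kind the paper itself leaves implicit (compare the extra ``$1+$'' in the constant of Theorem \ref{theo:decid}), so it is a boundary adjustment rather than a genuine gap, but it should be stated precisely.
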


\begin{proof}
If $\# \mathcal{R}_{\phi (u_i)} = 1$, then it is clear that $x$ is periodic.

Now suppose $x$ is periodic.
From the choice of $K$ and the pigeon hole principle, there exist $i$ and $j$, $i<j$, such that
$(\lambda_i , \sigma_i) = (\sigma_j , \lambda_j)$.
Thus, $R_{u_i} = R_{u_j}$ and $R_{\phi(u_i)} = R_{\phi (u_j)}$.
Moreover, remark that we have $\mathcal{D}^i (x) = \mathcal{D}^j (x)$. 
Thus, $\mathcal{D}^i (x) = \mathcal{D}^{i+k(j-i)} (x)$ for all $k\geq 1$.
From Proposition \ref{prop:derder}, $\lim_{k\to +\infty } |u_{i+k(j-i)}| = +\infty $.
We conclude with Lemma \ref{lemma:period-return}.
\end{proof}

\begin{theo}
The HD0L periodicity problem for infinite sequences is decidable in the context of primitive morphisms.
\end{theo}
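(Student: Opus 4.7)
The plan is to reduce the general problem to the setting of Proposition \ref{prop:decid-per}, which already handles the case where $\phi$ is a coding \emph{and} the incidence matrix of $\sigma$ has strictly positive entries. Two effective reductions will suffice.

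First, I would apply Proposition \ref{prop:imagemorphisme} to dispose of the general morphism $\phi$. Its proof is constructive: from $x = \phi(\sigma^\omega(a))$ it builds a primitive substitution $\tau : D^* \to D^*$, an explicit fixed point $\psi(y) = \tau^\omega(e)$, and a coding $\chi : D^* \to B^*$ such that $x = \chi(\psi(y))$. The alphabet $D$, the substitution $\tau$, the starting letter $e$, and the coding $\chi$ are all computable from $\sigma$, $\phi$, and $a$, so we may harmlessly replace the original data $(\sigma, \phi, a)$ by $(\tau, \chi, e)$.

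Second, to meet the positivity hypothesis of Proposition \ref{prop:decid-per}, I would invoke Lemma \ref{lemme:horn} to produce a computable integer $k \leq (\#D - 1)(\#D)^{\#D}$ such that $M_\tau^k$ has strictly positive entries. Since $\tau^\omega(e) = (\tau^k)^\omega(e)$, the fixed point is unchanged, and $x = \chi((\tau^k)^\omega(e))$ now fits exactly into the framework of Proposition \ref{prop:decid-per}, applied with the primitive substitution $\tau^k$ and the coding $\chi$.

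At this point periodicity of $x$ is equivalent to the finite condition of Proposition \ref{prop:decid-per}: there exists $i < K$ with $\#\mathcal{R}_{\chi(u_i)} = 1$, where $K$ is the explicit bound computed from $\tau^k$. By Lemma \ref{lemma:returnsubalgo} and Proposition \ref{prop:imagereturnalgo}, the prefixes $u_i$, the return substitutions $\sigma_{u_i}$, the morphisms $\lambda_{u_i}$, and hence the sets $\mathcal{R}_{\chi(u_i)}$, can all be read off from finite, computable prefixes of $x$; the test therefore terminates. No genuine obstacle remains, since each reduction trivially preserves $x$ and hence its periodicity status; the substance of the argument is packaged in Propositions \ref{prop:imagemorphisme} and \ref{prop:decid-per}, so the main point to verify is simply that every constant involved (the alphabet of $\tau$, the exponent $k$, the bound $K$, and the length of the prefix of $x$ to be examined) is effectively bounded.
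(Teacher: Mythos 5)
Your proposal is correct and follows essentially the same route as the paper: reduce to a coding of a fixed point of a primitive substitution with strictly positive incidence matrix (Proposition \ref{prop:imagemorphisme} together with Lemma \ref{lemme:horn}), then apply the finite, effectively checkable criterion of Proposition \ref{prop:decid-per}, using Lemma \ref{lemma:returnsubalgo} and Proposition \ref{prop:imagereturnalgo} to compute the relevant return data. The only difference is the order of the reductions (you apply Proposition \ref{prop:imagemorphisme} first and only then pass to a power of the resulting substitution), which is if anything slightly cleaner, since the positivity hypothesis must hold for the substitution actually fed into Proposition \ref{prop:decid-per}.
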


\begin{proof}
First, let us make some reductions of the problem.
From Lemma \ref{lemme:horn}, we can suppose the incidence matrix of $\sigma$ has strictly positive entries.
Suppose $\phi$ is erasing. 
It cannot erase all the letters.
Hence $\psi = \phi\sigma$ is a non erasing morphism and remark that $x=\psi (y)$.
Consequently, we can suppose $\phi$ non-erasing.
To end with the reductions, from Proposition \ref{prop:imagemorphisme} we can suppose $\phi $ is a coding.

Now, with the notations introduced in this section, it suffices to compute $R_{\phi (u_n)}$ from $n=1$ to $n=K$.
If for some $i$ less than $K$, $\# R_{\phi (u_n)} = 1$ then $x$ is periodic, otherwise $x$ is not periodic.
It remains to explain how to compute $R_{\phi (u_n)} $.
We recall the maps $\lambda_{u_n} : R_{y, u_n} \to R_{x, \phi (u_n)}$ are algorithmically definable (see Proposition \ref{prop:imagereturnalgo}). 
Hence it can be easily check that $\# R_{\phi (u_n)} = 1$. 
\end{proof}

\bibliographystyle{new2}
\bibliography{honkala}

\end{document}